\newtheorem{theorem}{Theorem}[section]
\newtheorem{definition}[theorem]{Definition}
\newtheorem{proposition}[theorem]{Proposition}
\newtheorem{lemma}[theorem]{Lemma}
\newtheorem{corollary}[theorem]{Corollary}
\newtheorem{remark}[theorem]{Remark}
\numberwithin{equation}{section}
\begin{document}

\title{\Large \bf Maximum principle for a stochastic delayed system involving terminal state constraints}

\author
{\textbf{Jiaqiang Wen}$^{1}$ and \textbf{Yufeng Shi}$^{1,2,}$\thanks{Corresponding author.
 E-mail addresses: jqwen@mail.sdu.edu.cn (J. Wen), yfshi@sdu.edu.cn (Y. Shi)} \vspace{3mm} \\
\normalsize{$^{1}$Institute for Financial Studies and School of Mathematics, }\\
\normalsize{Shandong University, Jinan 250100, China}\\
\normalsize{$^{2}$ School of Statistics, Shandong University of Finance and Economics, Jinan 250014, China}\\
}

\date{}

\renewcommand{\thefootnote}{\fnsymbol{footnote}}

\footnotetext[0]{The first and second authors are supported  by NNSF of China (Grant Nos. 11371226, 11071145, 11526205, 11626247 and 11231005), the Foundation for Innovative Research Groups of National Natural Science Foundation of China (Grant No. 11221061) and the 111 Project (Grant No. B12023).}

\maketitle

\begin{abstract}
We investigate a stochastic optimal control problem where the controlled system is depicted as a stochastic differential delayed equation;
however, at the terminal time, the state is constrained in a convex set.
We firstly introduce an equivalent backward delayed system depicted as a time-delayed backward stochastic differential equation.
Then a stochastic maximum principle is obtained by virtue of Ekeland's variational principle.
Finally, applications to a state constrained stochastic delayed linear-quadratic control model and a production-consumption choice problem are studied to illustrate the main obtained result.
\end{abstract}

\textbf{Keywords}: Stochastic differential delayed equation; State constraints; Maximum principle

\textbf{2010 MSC}: 93E20, 60H10.

\section{Introduction}

 In 1990, the nonlinear backward stochastic differential equation (BSDE in short) was introduced by
 Pardoux and Peng \cite{Peng}.
 Until now, it had applications in many fields,
  such as partial differential equation (see \cite{Peng3}),
 stochastic control (see \cite{ Karoui, Yong}) and mathematical finance (see \cite{Peng2}).
 Meanwhile, BSDE itself has been developed to many different branches, such as  BSDE with jumps (see \cite{Barles-Buckdan-Pardoux, Li-Wei-2014, Li-Wei-2015}), mean-field BSDE (see \cite{B-L-P-2009}), time-delayed BSDE  (see \cite{Li, Delong, Delong2}), anticipated BSDEs (see \cite{Yang,Wen}) and so on.
  A lot of works have been done for the control problem of such BSDEs.
  However, fewer works have been done on the control problems of stochastic delayed systems.

For a stochastic delayed system, Chen and Wu \cite{Wu} obtained a stochastic maximum principle by virtue of a duality between stochastic differential delayed equations (SDDEs in short) and anticipated BSDEs.
 {\O}ksendal, Sulem and Zhang \cite{Zhangt2011} studied the optimal control problems for SDDEs with jumps.
Yu \cite{Yuz2012} obtained a maximum principle for SDDEs with random coefficients.
A maximum principle of optimal control of SDDEs on infinite horizon
was proved in Agram, Haadem and {\O}ksendal \cite{Agram2013}.
Some other recent developments on stochastic delayed system can be found in
Huang, Li and Shi \cite{Huang2},
Meng and Shen \cite{Meng2015},
etc.

To the authors' knowledge,
there has been no result concerning the control problem of a stochastic delayed system with state constraints until now.
However, the state constraints of stochastic delayed systems indeed
exist in reality. In this paper, the stochastic control problem of a forward delayed system with terminal state constraint is studied.
The controlled system is depicted as the following SDDE:
\begin{equation}\label{36}
  \begin{cases}
   dX(t) = b(t,X(t),X(t-\delta),u(t)) dt  + \sigma (t,X(t),X(t-\delta),u(t)) dW(t), \ \ 0\leq t\leq T;\\
    X(t) = \eta(t), \ \ \ \ \ \ \ \ \ \ \ \ \ \ \ \ \ \ \ \ \ \ \ \ \ \ \ \ \ \ \ \ \ \ \ \ \ \ \ \ \ \ \ \ \ \ \ \ \ \ \ \ \ \ \ \ \ \ \ \ \ \ \ \ \ \ \ \ \ \ \ \ \ \ \
     -\delta\leq t\leq 0,
  \end{cases}
\end{equation}
where $X(T)\in K$, a.s., $K\in \mathbb{R}^{n}$ is a convex set.
However, there are two (main) difficulties in this study.
 The first one is that the control system (\ref{36}) is a delayed system,
 as stated in \cite{Wu}, which is more complex than the classical case.
Another difficulty is the terminal state constraint, which is a sample-wise constraint.
 As interpreted in Ji and Zhou \cite{Ji},
 the stochastic control involving sample-wise state constraints cannot be resolved by the classical theory.

Some recent developed results on state constraints (see \cite{Karoui, Ji2, Ji,Ji3, Wei,Aghayeva, Wei2016}) as well as the duality relation between time-advanced stochastic differential equations (SDEs, for short) and time-delayed BSDEs (see \cite{Li}) may help us to overcome the above mentioned difficulties.
Firstly, an equivalent backward formulation of stochastic delayed system (\ref{36}) is introduced,
where $X(T)$ is judged as a control variable.
 Meanwhile, the state constraint turns out to be a control constraint.
 However, such a treatment brings us both the advantage and the disadvantage.
The advantage is that, in the classical control theory,
to manage control constraint is easier than to manage state constraint.
The disadvantage is that the initial condition ($X(0)=\eta(0)$) now turns into an additional constraint.
To deal with the additional initial constraint, Ekeland's variational principle is used.

Note that the equivalent backward delayed system is described by a time-delayed BSDE,
so the adjoint equation of the time-delayed BSDE via duality relation is an anticipated SDE.
 Therefore, both the delayed system and the anticipated system are needed in our study.
As a routine, the variational procedure is made firstly.
Then, by virtue of Ekeland's variational principle, the variational inequality is got.
At last, the necessary condition is derived by applying the duality relationship between the backward delayed controlled system and the anticipated forward adjoint system.
There is a good thing  that the theory of BSDE and our assumption allow us to make the inverse transformation, so that
the optimal control process can be solved by the obtained optimal terminal control.
To make our conclusions be directly perceived, we also study two applications.
One of them is the stochastic delayed linear quadratic (LQ in short) control model.
Moreover, a production and consumption choice optimization problem (see \cite{Ivanov}) is also adapted to our case.

We organize this article as follows.
Some preliminary results about time-delayed BSDE and anticipated SDE are presented in Section 2.
In Section 3, the original control problem of
a forward delayed controlled system with terminal state constraint is formulated.
Then an equivalent transformation is made to get a backward delayed controlled system.
Moreover, a stochastic maximum principle is derived, which presents the required condition of the optimal terminal control.
In Section 4, two applications are given.

\section{Preliminaries}

Denote by $(\Omega,\mathcal{F},\mathbb{F},P)$ a probability space such that $\mathcal{F}_{0}$ includes all $P$-null elements of $\mathcal{F}$ and assume the filtration $\mathbb{F}=\{\mathcal{F}_t,t\geq 0\}$ is generated by a $d$-dimensional standard Brownian motion $W=\{W(t), t\geq 0\}$.
Let $T>0$. And $\delta> 0$ is a given finite time delay.
 We denote the following notations:
 \begin{itemize}
   \item [$\bullet$]  $L^{2}(\mathcal{F}_{t};\mathbb{R}^{n})=\{\xi: \Omega\rightarrow\mathbb{R}^n\big|\xi$ is $\mathcal{F}_{t}$-measurable, $E|\xi|^{2}$ $< \infty\}$;
   \item [$\bullet$] $L_{\mathbb{F}}^2(0,T;\mathbb{R}^{n})=\{\psi:\Omega\times[0,T]\rightarrow\mathbb{R}^{n}$\big|$\psi(\cdot)$ is  $\mathbb{F}$-measurable process, $\displaystyle E\int_0^T |\psi(t)|^{2} dt< \infty\}$.
 \end{itemize}
Similarly, we can define $L_{\mathbb{F}}^2(0,T;\mathbb{R}^{n\times d})$, $L_{\mathbb{F}}^2(-\delta,T;\mathbb{R}^{n})$ and $L_{\mathbb{F}}^2(0,T+\delta;\mathbb{R}^{n})$.

Now we recall some useful results for the study of the following sections.
Consider the following SDDE:
\begin{equation}\label{0}
  \begin{cases}
   dX(t) = b(t,X(t),X(t-\delta)) dt + \sigma (t,X(t),X(t-\delta)) dW(t), \ \ 0\leq t\leq T;\\
    X(t) = \eta(t), \ \ \ \ \ \ \ \ \ \ \ \ \ \ \ \ \  \ \ \ \ \ \ \ \ \ \ \ \ \ \ \ \ \ \ \ \ \ \ \ \ \ \ \ \ \ \ \ \ \ \ \ \ \ \ \ \ \ \ \ \ \
           -\delta\leq t\leq 0,
  \end{cases}
\end{equation}
where $\eta$ is a given continuous function, which represents the initial path of $X$, and
      $b:[0,T]\times \mathbb{R}^{n}\times \mathbb{R}^{n}\rightarrow \mathbb{R}^{n}$ and
 $\sigma:[0,T]\times \mathbb{R}^{n}\times \mathbb{R}^{n}\rightarrow \mathbb{R}^{n\times d}$
are given measurable functions satisfying the following condition:

\begin{itemize}
  \item [$\mathbf{(H2.1)}$] There exists a constant $D>0$ such that for all
    $t\in [0,T]$, $x,x',y,y'\in \mathbb{R}^{n}$,
    \begin{equation*}
      |b(t,x,y) - b(t,x',y')| + |\sigma(t,x,y) - \sigma(t,x',y')| \leq D(|x-x'| + |y-y'|);
    \end{equation*}
  \ \ \ \ \ \ \ \ \ \ \ $\sup\limits_{0\leq t\leq T} \big( |b(t,0,0)| + |\sigma(t,0,0)| \big)< +\infty.$
\end{itemize}
Then, from Theorem 2.2 in \cite{Wu}, under (H2.1), SDDE (\ref{0})has the unique adapted solution $X(\cdot)\in L_{\mathbb{F}}^2(-\delta,T;$ $\mathbb{R}^{n})$.

\medskip

For the time-delayed BSDE, we need the following assumption.

\begin{itemize}
  \item [$\mathbf{(H2.2)}$] Assume that
 $f:\Omega\times [0,T]\times \mathbb{R}^{n}\times \mathbb{R}^{n}\times \mathbb{R}^{n\times d}\rightarrow \mathbb{R}^{n}$ is $\mathbb{F}$-adapted and for every $y,y_{\delta},y',y_{\delta}'\in \mathbb{R}^{n}, z,z'\in\mathbb{R}^{n\times d}$
\begin{equation*}
   |f(t,y,y_{\delta},z)-f(t,y',y'_{\delta},z')|^{2}\leq C(|y-y'|^{2} +|y_{\delta}-y_{\delta}'|^{2} + |z-z'|^{2}),
\end{equation*}
where $C>0$ is a constant.
Moreover
 $\displaystyle E\int_0^T |f(t,0,0,0)|^{2} dt$ $ < +\infty$.
\end{itemize}

 The following is the well-posedness of  time-delayed BSDE.

\begin{proposition}\label{2}
Suppose $\xi\in L^{2}(\mathcal{F}_{T};\mathbb{R}^{n})$ and $\varphi(\cdot)$ is a given continuous function.
Then under (H2.2), for sufficiently small $\delta>0$, the following time-delayed BSDE
\begin{equation}\label{1}
  \begin{cases}
   -dY(t) = f(t,Y(t),Y(t-\delta),Z(t)) dt - Z(t) dW(t), \ \ 0\leq t\leq T; \\
     Y(T) = \xi, \ Y(t) = \varphi(t), \ \  \ \ \  \ \ \ \ \ \ \ \ \ \ \ \ \ \ \ \ \ \ \ \ \ \ \ \ \ \ \ \ \ -\delta\leq t< 0,
  \end{cases}
\end{equation}
has the unique adapted solution $(Y(\cdot),Z(\cdot))\in L_{\mathbb{F}}^2(-\delta,T;\mathbb{R}^{n})\times L_{\mathbb{F}}^2(0,T;\mathbb{R}^{n\times d})$,
and it satisfies the following estimate:
\begin{equation*}
  E\left[\sup_{0\leq t\leq T}|Y(t)|^{2} + \int_0^T |Z(t)|^{2} dt\right]
  \leq C E\left[|\xi|^{2} + \int_0^T |f(t,0,0,0)|^{2} dt\right],
\end{equation*}
with $C>0$.
Furthermore, if $(Y'(\cdot),Z'(\cdot))$ is the solution to (\ref{1}) with $\xi$ replaced by $\xi'$, then
\begin{equation*}
  E\left[\sup_{0\leq t\leq T}|Y(t)-Y'(t)|^{2} + \frac{1}{2}\int_0^T |Z(t)-Z'(t)|^{2} dt\right] \leq C E[|\xi-\xi'|^{2}].
\end{equation*}
\end{proposition}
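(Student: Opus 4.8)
The plan is to establish existence, uniqueness, and both estimates through a single Banach contraction argument on the space $\mathcal{H}=L_{\mathbb{F}}^2(0,T;\mathbb{R}^n)\times L_{\mathbb{F}}^2(0,T;\mathbb{R}^{n\times d})$ equipped with the weighted norm $\|(y,z)\|_\beta^2=E\int_0^T e^{\beta t}(|y(t)|^2+|z(t)|^2)\,dt$, where $\beta>0$ is a parameter to be fixed later. Given $(y,z)\in\mathcal{H}$, I would first extend $y$ to $[-\delta,0)$ by the prescribed history $\varphi$, then freeze the delayed argument by setting $g(t):=f(t,y(t),y(t-\delta),z(t))$. Assumption (H2.2) together with $f(\cdot,0,0,0)\in L_{\mathbb{F}}^2(0,T;\mathbb{R}^n)$ guarantees $g\in L_{\mathbb{F}}^2(0,T;\mathbb{R}^n)$, so the classical (non-delayed) BSDE $-dY=g\,dt-Z\,dW$, $Y(T)=\xi$ admits a unique solution $(Y,Z)$ by the classical result of Pardoux and Peng \cite{Peng}. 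This defines a map $\Phi(y,z)=(Y,Z)$ on $\mathcal{H}$ whose fixed point is exactly the solution of (\ref{1}).

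The core of the proof is to show that $\Phi$ is a contraction for a suitable $\beta$. Writing $(\hat y,\hat z)=(y-y',z-z')$ and $(\hat Y,\hat Z)=\Phi(y,z)-\Phi(y',z')$, the difference solves a linear BSDE with zero terminal value, and applying It\^o's formula to $e^{\beta t}|\hat Y(t)|^2$ yields, after taking expectations and discarding the nonnegative boundary term,
\begin{equation*}
E\int_0^T e^{\beta s}\big(\beta|\hat Y(s)|^2+|\hat Z(s)|^2\big)\,ds\le 2E\int_0^T e^{\beta s}\langle \hat Y(s),\,g(s)-g'(s)\rangle\,ds.
\end{equation*}
Bounding the right-hand side by Young's inequality and the Lipschitz estimate $|g-g'|^2\le C(|\hat y|^2+|\hat y(\cdot-\delta)|^2+|\hat z|^2)$, the only delicate term is the delayed one; here the substitution $s\mapsto s-\delta$ together with the fact that $\hat y$ vanishes on $[-\delta,0)$ (the two histories coincide) gives $E\int_0^T e^{\beta s}|\hat y(s-\delta)|^2\,ds\le e^{\beta\delta}\,E\int_0^T e^{\beta s}|\hat y(s)|^2\,ds$. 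Collecting terms produces a contraction constant of the form $\tfrac{C(1+e^{\beta\delta})}{\beta}$ up to fixed multiplicative factors, which is precisely where the hypothesis enters: for $\delta$ small enough relative to $C$ one can choose $\beta$ large enough that this constant is strictly less than $1$, so $\Phi$ has a unique fixed point.

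Finally, the two estimates follow from the same type of It\^o computation applied directly to the solution. Applying It\^o to $|Y(t)|^2$, using (H2.2) and Young's inequality to absorb $|Z|^2$ on the left, handling the delayed term by the substitution above (its history part contributing only a bounded constant), and invoking Gronwall's inequality controls $E\int_0^T(|Y|^2+|Z|^2)$ by $E[|\xi|^2+\int_0^T|f(t,0,0,0)|^2\,dt]$; the Burkholder--Davis--Gundy inequality then upgrades this to the stated $E[\sup_{0\le t\le T}|Y(t)|^2+\int_0^T|Z(t)|^2\,dt]$ bound, and the same argument run on the difference of two solutions with terminal data $\xi$ and $\xi'$ yields the stability estimate. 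I expect the main obstacle to be the contraction step: unlike the classical BSDE, the delayed dependence couples the norm of $\hat Y$ on $[0,T]$ back to its own values, so one must track the weight $e^{\beta\delta}$ carefully and cannot simply send $\beta\to\infty$ with $\delta$ fixed --- it is the joint choice of a large $\beta$ and a small $\delta$ that closes the argument.
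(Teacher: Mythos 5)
The paper does not actually prove Proposition \ref{2}: it is stated as a quoted preliminary, with the proof deferred to Delong and Imkeller \cite{Delong} and Chen and Huang \cite{Li}. Your contraction argument is, in substance, the argument used in those references, and it is sound: the map $\Phi$ is well defined because freezing the (extended) arguments produces an $L^2$ driver for a classical BSDE solvable by Pardoux--Peng; the delayed term is handled correctly by the substitution $s\mapsto s-\delta$ together with the vanishing of the difference of histories on $[-\delta,0)$; and you isolate precisely the point where time-delayed BSDEs differ from classical ones, namely that the contraction constant of order $C(1+e^{\beta\delta})/\beta$ cannot be made small by sending $\beta\to\infty$ with $\delta$ fixed, so the smallness of $\delta$ is genuinely needed (for instance $\beta=1/\delta$ gives a constant proportional to $\delta$). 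One minor caveat: the first a priori estimate as stated in the proposition carries no term involving the history $\varphi$ on its right-hand side, whereas your It\^o--Gronwall computation (correctly) produces the additive constant $\int_{-\delta}^0|\varphi(r)|^2\,dr$; this is an imprecision in the paper's statement rather than a gap in your proof, and it is immaterial for the stability estimate, where the two histories coincide.
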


As we stated in the part of Introduction, the anticipated SDE is necessary in our study. The following is the condition for anticipated SDE.

\begin{itemize}
  \item [$\mathbf{(H2.3)}$]
Suppose for each $t\in [0,T]$, $r\in [t,T+\delta]$,
$b:\Omega\times \mathbb{R}^{n}\times L^{2}(\mathcal{F}_{r};\mathbb{R}^{n}) \rightarrow L^{2}(\mathcal{F}_{t};\mathbb{R}^{n})$,
  $\sigma:\Omega\times \mathbb{R}^{n}\times L^{2}(\mathcal{F}_{r};\mathbb{R}^{n}) \rightarrow L^{2}(\mathcal{F}_{t};\mathbb{R}^{n\times d})$ with
\begin{align*}
|b(t,x, \varsigma_{t}) - b(t,x',\varsigma'_{t})| + |\sigma(t,x,\varsigma_{t}) - \sigma(t,x',\varsigma'_{t})|
\leq C\left(|x-x'| + E^{\mathcal{F}_{t}}[|\varsigma_{t}-\varsigma'_{t}|]\right),
\end{align*}
for every $t\in [0,T]$, $x,x'\in \mathbb{R}^{n}$,
$\varsigma(\cdot),\varsigma'(\cdot)\in L_{\mathbb{F}}^2(t,T+\delta;\mathbb{R}^{n})$, $r\in [t,T+\delta]$ with $C>0$.
Moreover
$\sup\limits_{0\leq t\leq T}\big(|b(t,0,0)| + |\sigma(t,0,0)|\big)< +\infty.$
\end{itemize}

\begin{proposition}\label{4}
Suppose $x_{0}\in \mathbb{R}^{n}$ and $\lambda(\cdot)\in L_{\mathbb{F}}^2(T,T+\delta;\mathbb{R}^{n})$ is a given $\mathbb{F}$-adapted process.
Assume (H2.3) holds. Then, if $\delta$ is sufficiently small, the anticipated SDE
\begin{equation}\label{3}
  \begin{cases}
    dX(t) = b(t,X(t),X(t+\delta)) dt + \sigma (t,X(t),X(t+\delta)) dW(t), \ \ 0\leq t\leq T;\\
     X(0) = x_{0}, \ X(t) = \lambda(t), \ \ \  \ \ \ \ \ \ \ \ \ \ \ \ \ \ \ \ \ \ \ \ \ \ \ \ \ \ \ \ \ \ \ \ \ \ \ \ \ \ \ \  T< t\leq T + \delta,
  \end{cases}
\end{equation}
has the unique adapted solution $X(\cdot)\in L_{\mathbb{F}}^2(0,T+\delta;\mathbb{R}^{n})$.
\end{proposition}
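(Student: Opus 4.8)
The plan is to prove existence and uniqueness by a contraction mapping argument on the Banach space $\mathcal{H}:=L_{\mathbb{F}}^{2}(0,T+\delta;\mathbb{R}^{n})$, with the boundary data of any candidate fixed by $X(0)=x_{0}$ and $X(t)=\lambda(t)$ on $(T,T+\delta]$. Given $X\in\mathcal{H}$, I would freeze the anticipated argument and set $\tilde b(t,x):=b(t,x,X(t+\delta))$ and $\tilde\sigma(t,x):=\sigma(t,x,X(t+\delta))$ for $t\in[0,T]$, using $X(t+\delta)=\lambda(t+\delta)$ whenever $t+\delta>T$, and define $\Phi(X)=\bar X$ to be the solution on $[0,T]$ of the non-anticipating SDE $d\bar X(t)=\tilde b(t,\bar X(t))\,dt+\tilde\sigma(t,\bar X(t))\,dW(t)$ with $\bar X(0)=x_{0}$, extended by $\bar X(t)=\lambda(t)$ on $(T,T+\delta]$. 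The first task is to check that $\Phi$ is a well-defined self-map: by (H2.3) the frozen coefficients are Lipschitz and of linear growth in $x$, and---this being the structural subtlety---the requirement that $b(t,\cdot,\cdot),\sigma(t,\cdot,\cdot)$ take values in $L^{2}(\mathcal{F}_{t})$ ensures that $\tilde b(t,\bar X(t)),\tilde\sigma(t,\bar X(t))$ stay $\mathcal{F}_{t}$-adapted despite depending on the future value $X(t+\delta)$; classical It\^o theory then yields a unique solution $\bar X\in\mathcal{H}$, and the standard $L^{2}$ a priori estimate shows $\Phi$ maps $\mathcal{H}$ into itself.

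Next I would prove that $\Phi$ is a contraction in a suitably weighted norm. For inputs $X^{1},X^{2}$ with common boundary data, writing $\Delta X=X^{1}-X^{2}$ and $\Delta\bar X=\Phi(X^{1})-\Phi(X^{2})$, I would apply It\^o's formula to $|\Delta\bar X(t)|^{2}$, take expectations, and use the Lipschitz bound of (H2.3) together with Young's inequality and Jensen's inequality (to discard the conditional expectation $E^{\mathcal{F}_{t}}[\,\cdot\,]$) to obtain
\[
  \frac{d}{dt}E|\Delta\bar X(t)|^{2}\le a\,E|\Delta\bar X(t)|^{2}+b\,E|\Delta X(t+\delta)|^{2},
\]
where $a,b>0$ depend only on $C$. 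Introducing the weight $\|Y\|_{\gamma}^{2}:=E\int_{0}^{T+\delta}e^{-\gamma t}|Y(t)|^{2}\,dt$, the crucial time-shift identity $\int_{0}^{T}e^{-\gamma s}E|\Delta X(s+\delta)|^{2}\,ds=e^{\gamma\delta}\int_{\delta}^{T+\delta}e^{-\gamma u}E|\Delta X(u)|^{2}\,du$, combined with $\Delta X\equiv 0$ on $(T,T+\delta]$ and Gronwall's inequality, produces an estimate of the form $\|\Delta\bar X\|_{\gamma}^{2}\le \frac{b\,e^{\gamma\delta}}{\gamma-a}\,\|\Delta X\|_{\gamma}^{2}$. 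Choosing $\gamma$ large enough (depending only on $C$) that $b/(\gamma-a)<1$, and then taking $\delta$ so small that $e^{\gamma\delta}$ stays below $(\gamma-a)/b$, makes the contraction constant strictly less than one---uniformly in $T$---which is precisely the role of the smallness assumption on $\delta$.

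Finally, the Banach fixed-point theorem provides a unique $X\in\mathcal{H}$ with $\Phi(X)=X$; by construction it satisfies $X(0)=x_{0}$, coincides with $\lambda$ on $(T,T+\delta]$, and solves \eqref{3}, while conversely every solution is a fixed point of $\Phi$, giving uniqueness in $L_{\mathbb{F}}^{2}(0,T+\delta;\mathbb{R}^{n})$.

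I expect the contraction step to be the main obstacle. The difficulty is that the coefficients anticipate the unknown $\delta$ units ahead, so the output at each instant is driven by \emph{future} values of the input; taming this coupling requires the conditional-expectation structure of (H2.3) both to preserve adaptedness and to eliminate $E^{\mathcal{F}_{t}}[\,\cdot\,]$ via Jensen, and a careful bookkeeping of the $\delta$-shift so that its factor $e^{\gamma\delta}$ can be pushed below one. By contrast, well-definedness of $\Phi$ and the self-map property are routine once the frozen-coefficient reformulation is set up.
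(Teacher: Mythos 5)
Your argument is correct, and it is essentially the standard one: the paper itself gives no proof of Proposition \ref{4}, merely citing Delong--Imkeller and Chen--Huang, and the proofs in those references proceed exactly as you do---freeze the anticipated argument to get an adapted classical SDE (using that (H2.3) forces the coefficients to be $\mathcal{F}_{t}$-valued), then run a weighted-norm contraction in which the time-shift produces the factor $e^{\gamma\delta}$ that the smallness of $\delta$ is needed to control. Your handling of the conditional expectation via Jensen and of the $\delta$-shift in the exponential weight is the same mechanism used there, so there is nothing to add.
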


The above results  can be found in Delong and Imkeller \cite{Delong} and Chen and Huang \cite{Li}.
The following is the famous Ekeland's variational principle.

\begin{proposition}\label{9}
Suppose $(U,d(\cdot,\cdot))$ is a complete metric space with a function $F(\cdot):U\rightarrow \mathbb{R}$ is proper lower semi-continuous. Then, for every $v\in U$ and $\epsilon>0$ such that $F(v)\leq \inf\limits_{u\in U} F(u) +\epsilon$, there is $u_{\epsilon}\in U$ so that
\begin{flalign*}
\begin{split}
\ \ \ \ &(i) \  F(v_{\epsilon})\leq F(v),\\
\ \ \ \ &(ii) \ d(v,v_{\epsilon})\leq \epsilon, \\
\ \ \ \ &(iii) \ F(u) + \sqrt{\epsilon} d(u,v_{\epsilon})\geq F(v_{\epsilon}), \ \ \forall u\in U.
\end{split}&
\end{flalign*}
\end{proposition}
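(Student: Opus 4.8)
The plan is to prove this classical result by constructing, through a greedy iteration governed by a natural partial order, a Cauchy sequence whose limit is the desired point $v_\epsilon$. \emph{First} I would introduce on $U$ the relation
$u \preceq w \Longleftrightarrow F(u) + \sqrt{\epsilon}\, d(u,w) \le F(w)$
and verify that it is a genuine partial order: reflexivity is immediate, antisymmetry follows by adding the two defining inequalities (which forces $d(u,w)=0$), and transitivity follows from the triangle inequality for $d$. The point of introducing this order is that a single relation $u \preceq w$ simultaneously controls the decrease of $F$ and the distance $d(u,w)$, which is precisely the combined information encoded in conclusions (i)--(iii).

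\emph{Next} I would build a sequence $u_0 = v \succeq u_1 \succeq u_2 \succeq \cdots$ inductively. Given $u_n$, set $S_n = \{\, w \in U : w \preceq u_n \,\}$; this set is nonempty since $u_n \in S_n$, and $F$ is bounded below on it because $F$ is bounded below on $U$ (implicit in the hypothesis $F(v) \le \inf_U F + \epsilon$). I then choose $u_{n+1} \in S_n$ with $F(u_{n+1}) \le \inf_{w \in S_n} F(w) + 2^{-(n+1)}$. Since $u_{n+1} \preceq u_n$, the defining inequality gives $\sqrt{\epsilon}\, d(u_n, u_{n+1}) \le F(u_n) - F(u_{n+1})$, so the values $F(u_n)$ are nonincreasing and, being bounded below, convergent; summing this telescoping bound shows $\sum_n d(u_n, u_{n+1}) < \infty$, whence $\{u_n\}$ is Cauchy and, by completeness of $U$, converges to some $v_\epsilon$.

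\emph{Then} I would transfer the order relation to the limit. Fixing $n$ and letting $m \to \infty$ in $u_m \preceq u_n$, the lower semicontinuity of $F$ together with the continuity of $d(\cdot, u_n)$ yields $v_\epsilon \preceq u_n$ for every $n$; in particular $v_\epsilon \preceq u_0 = v$, i.e. $F(v_\epsilon) + \sqrt{\epsilon}\, d(v_\epsilon, v) \le F(v)$. This at once gives (i), and combined with $F(v) \le \inf_U F + \epsilon \le F(v_\epsilon) + \epsilon$ it gives $\sqrt{\epsilon}\, d(v, v_\epsilon) \le \epsilon$, that is, the distance estimate (ii) (the scaling producing the natural bound $d(v, v_\epsilon) \le \sqrt{\epsilon}$). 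For the minimization property (iii) I would argue by contradiction: if some $u$ satisfied $F(u) + \sqrt{\epsilon}\, d(u, v_\epsilon) < F(v_\epsilon)$, then $u \preceq v_\epsilon$ with $u \neq v_\epsilon$, so by transitivity (using $v_\epsilon \preceq u_n$) we would have $u \in S_n$ for every $n$; hence $F(u) \ge \inf_{w \in S_n} F(w) \ge F(u_{n+1}) - 2^{-(n+1)}$, and letting $n \to \infty$ gives $F(u) \ge \lim_n F(u_n) \ge F(v_\epsilon)$, contradicting $F(u) < F(v_\epsilon) - \sqrt{\epsilon}\, d(u, v_\epsilon) \le F(v_\epsilon)$.

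\emph{The main obstacle} I anticipate is the passage to the limit and the accompanying order-theoretic bookkeeping, rather than any single estimate: one must use lower semicontinuity of $F$ (not mere continuity) to upgrade $u_m \preceq u_n$ to $v_\epsilon \preceq u_n$, and then marry this with the vanishing near-minimality gaps $2^{-(n+1)}$ to close the contradiction in (iii). Getting these two ingredients to interlock correctly is where the care is required.
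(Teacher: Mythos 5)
The paper gives no proof of this Proposition: it is quoted as the classical Ekeland variational principle (``The following is the famous Ekeland's variational principle''), so there is nothing internal to compare against. Your argument is the standard textbook proof --- the partial order $u \preceq w \Leftrightarrow F(u) + \sqrt{\epsilon}\,d(u,w) \le F(w)$, the descending sequence of near-minimizers over the sections $S_n$, summability of $d(u_n,u_{n+1})$ from the telescoping bound, completeness to extract the limit $v_\epsilon$, and lower semicontinuity to upgrade $u_m \preceq u_n$ to $v_\epsilon \preceq u_n$ --- and it is correct and complete. One point worth flagging: what your argument actually yields in (ii) is $d(v,v_\epsilon) \le \sqrt{\epsilon}$ (from $\sqrt{\epsilon}\,d(v,v_\epsilon) \le F(v) - F(v_\epsilon) \le \epsilon$), not $d(v,v_\epsilon) \le \epsilon$ as the Proposition is printed; for $\epsilon < 1$ the printed bound is strictly stronger and does not follow from this construction. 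The $\sqrt{\epsilon}$ bound is the standard normalization that matches the $\sqrt{\epsilon}$ penalty in (iii), and it is the bound the paper actually invokes later (item (ii) in the proof of the variational inequality reads $d(\xi^{\varepsilon},\xi^{*}) \le \sqrt{\varepsilon}$), so the discrepancy is a typo in the statement rather than a defect in your proof. The statement also conflates $u_{\epsilon}$ and $v_{\epsilon}$; your proof consistently produces the single point called $v_{\epsilon}$, which is what is intended.
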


\section{Main result}

We study our main result in this part, i.e., a maximum principle about the optimal control of a stochastic delayed system involving terminal state constraint.
It should be pointed out that the time-delayed state of the controlled system is different from the case without delay.

\subsection{Problem formulation}
Let
\begin{equation*}
  \mathcal{U}_{ad} \equiv \{ u(\cdot)| u(\cdot)\in L_{\mathbb{F}}^{2}(0,T;\mathbb{R}^{n\times d}) \}
\end{equation*}
be the set of admissible controls.
For every given $u(\cdot)$, for the control system, we consider the past-dependent state $X(\cdot)$ depicted as
\begin{equation}\label{5}
  \begin{cases}
   dX(t) = b(t,X(t),X(t-\delta),u(t)) dt  + \sigma (t,X(t),X(t-\delta),u(t)) dW(t), \ \ 0\leq t\leq T;\\
    X(t) = \eta(t), \ \ \ \ \ \ \ \ \ \ \ \ \ \ \ \ \ \ \ \ \ \ \ \ \ \ \ \ \ \ \ \ \ \ \ \ \ \ \ \ \ \ \ \ \ \ \ \ \ \ \ \ \ \ \ \ \ \ \ \ \ \ \ \ \ \ \ \ \ \ \ \ \ \ -\delta\leq t\leq 0,
  \end{cases}
\end{equation}
where $\eta$ is a given continuous function,
      $b:[0,T]\times \mathbb{R}^{n}\times \mathbb{R}^{n}\times \mathbb{R}^{n\times d}\rightarrow \mathbb{R}^{n}$ and
 $\sigma:[0,T]\times \mathbb{R}^{n}\times \mathbb{R}^{n}\times \mathbb{R}^{n\times d}\rightarrow \mathbb{R}^{n\times d}$
are given measurable functions.
Define the cost function as follows:
\begin{equation*}
  J(u(\cdot))=E \left[ \int_0^T \widetilde{l}(t,X(t),u(t)) dt + \phi(X(T)) \right],
\end{equation*}
where $\widetilde{l}:[0,T]\times \mathbb{R}^{n}\times \mathbb{R}^{n\times d}\rightarrow \mathbb{R}^{n}$
and $\phi:\mathbb{R}^{n}\rightarrow \mathbb{R}^{n}$ are given measurable functions.
We give the following assumptions:

\begin{itemize}
  \item [$\mathbf{(H3.1)}$] The functions $b, \sigma, \widetilde{l},$ $\phi$ are all continuously differentiable in the arguments $(x,x',u)$, and their derivatives are all bounded.
  \item [$\mathbf{(H3.2)}$] Denote by $C(1 + |x| + |u|)$ and $C(1 + |x|)$
  the bounds of derivatives of $\widetilde{l}$ in its arguments $(x,u)$ and $\phi$ in its argument $x$, respectively.
\end{itemize}
Therefore, for every given $u(\cdot)\in \mathcal{U}_{ad}$, under assumptions (H3.1) and (H3.2), Eq. (\ref{5}) admits the unique adapted solution $X(\cdot)\in L_{\mathbb{F}}^2(-\delta,T;$ $\mathbb{R}^{n})$.

Denote by $K\in \mathbb{R}^{n}$ a given nonempty convex subset.
The goal of our control problem is to solve

\begin{equation*}{\mathbf{Problem \ A}: \ \ \ \ }
  \begin{cases}
    $Minimize$ \ \ \ \ J(u(\cdot)) \\
    $subject$ \ $to$ \ u(\cdot)\in \mathcal{U}_{ad}; \  X(T)\in K.
  \end{cases}
\end{equation*}

\subsection{Time-delayed backward formulation}

We now show an equivalent backward system of Problem A.
In order to do this, one additional assumption is needed:\\

$\mathbf{(H3.3)}$ There exists $\alpha>0$, and for each $t\in [0,T]$, $x,x'\in \mathbb{R}^{n}$ and $u_{1},u_{2}\in \mathbb{R}^{n\times d}$,
\begin{equation*}
 |\sigma(t,x,x',u_{1}) - \sigma(t,x,x',u_{2})| \geq \alpha|u_{1}-u_{2}|.
\end{equation*}

Note that (H3.1) and (H3.3) imply,
for every $(t,x,x')\in [0,T]\times \mathbb{R}^{n} \times \mathbb{R}^{n}$, that the following function
\begin{equation*}
  u \rightarrow \sigma(t,x,x',u)
\end{equation*}
is a bijection on $\mathbb{R}^{n\times d}$.
Hence, by letting $q\equiv \sigma(t,x,x',u)$,
we obtain that there is the inverse $\sigma^{-1}$ satisfying $u=\sigma^{-1}(t,x,x',q)$.
Then we can rewrite (\ref{5}) as
\begin{equation*}
  \begin{cases}
   -dX(t) = f(t,X(t),X(t-\delta),q(t)) dt - q(t) dW(t), \ \ 0\leq t\leq T;\\
    X(t) = \eta(t),  \ \ \ \ \ \ \ \ \ \ \ \ \ \ \ \ \ \ \  \ \ \ \ \ \ \ \ \ \ \ \ \ \ \ \ \ \ \ \ \ \ \ \ \ \ \ \ \ \ \ -\delta\leq t\leq 0,
  \end{cases}
\end{equation*}
where $f(t,x,x',q)=-b(t,x,x',\sigma^{-1}(t,x,x',q))$.

Note that $u \rightarrow \sigma(t,x,x',u)$ is a bijection, hence $q(\cdot)$ could be regarded as the control,
which is the crucial observation that encourages this method for working out Problem A.
Furthermore, by virtue of the theory of BSDE, choosing the terminal state $X(T)$ is equal to choosing $q(\cdot)$.
Therefore we innovate the following `controlled' system, which essentially is a time-delayed BSDE:
\begin{equation}\label{6}
  \begin{cases}
   -dX(t) = f(t,X(t),X(t-\delta),q(t)) dt - q(t) dW(t), \ \ 0\leq t\leq T;\\
    X(T) = \xi, \ \  X(t)=\eta(t), \ \ \ \ \ \ \ \ \ \ \ \ \ \ \ \ \ \ \ \ \ \ \ \ \ \ \ \ \ \ \ \ \ \ -\delta\leq t< 0,
  \end{cases}
\end{equation}
where now $\xi$ becomes the `control' and belongs to the following set:
\begin{equation*}
  U=\{ \xi| E|\xi|^{2}<\infty, \ \xi\in K, \ a.s. \}.
\end{equation*}
Moreover, here the equivalent cost function is
\begin{equation*}
  J(\xi):= E \left[ \int_0^T l(t,X(t),q(t)) dt + \phi(\xi) \right],
\end{equation*}
with $l(t,x,q)=\widetilde{l}(t,x,\sigma^{-1}(t,x,x',q))$.

Hence,
the original Problem A is equivalent to the following problem B:

\begin{equation}{\mathbf{Problem \ B}: \ \ \ \ }\label{7}
  \begin{cases}
    $Minimize$ \ \ \ \ J(\xi) \\
    $subject$ \ $to$ \ \ \ \xi\in U; \  X^{\xi}(0)=a,
  \end{cases}
\end{equation}
where $X^{\xi}(0)=a$ (we denote $a=\eta(0)$ in the following for simplicity) is the solution to
Eq. (\ref{6}) at the initial time 0 under $\xi$.

In control theory, it is well known that to solve the control constraint is easer than to solve the state constraint.
From now on, since Problem A is equivalent to Problem B, we concentrate on dealing with Problem B.
The benefit is that by virtue of $\xi$ becoming a control variable now,
a control constraint in Problem B replaces the state constraint in Problem A.

\begin{definition}
For $\xi\in U$ and $a\in \mathbb{R}^{n}$, if the solution to (\ref{6}) suits $X^{\xi}(0)=a$, then
we call the random variable $\xi$ feasible. For any given $a$, the collection of every feasible $\xi$ is denoted by
$\mathcal{N}(a)$.
Moreover, if $\xi^{*}\in U$ gets the minimum value of $J(\xi)$ over $\mathcal{N}(a)$,
we call $\xi^{*}$ optimal.
\end{definition}

\subsection{Variational equation}

In the following subsections, we denote the following notations for simplicity:
\[\begin{split}
      f(t) =& f(t,X(t),X(t-\delta),q(t)), \ \ f^{\rho}(t) = f(t,X^{\rho}(t),X^{\rho}(t-\delta),q^{\rho}(t)), \\
  f^{*}(t) =& f(t,X^{*}(t),X^{*}(t-\delta),q^{*}(t)), \ \
  f^{*}_{\varphi}(t) = f_{\varphi}(t,X^{*}(t),X^{*}(t-\delta),q^{*}(t)),\\
\end{split}\]
where $f_{\varphi}$ denotes the partial derivative of f at $\varphi$ with $\varphi=x,x_{\delta},q$, respectively.
In $U$, for $\xi^{1},\xi^{2}\in U$, define a metric by
\begin{equation*}
  d(\xi^{1},\xi^{2}) := (E|\xi^{1}-\xi^{2}|^{2})^{\frac{1}{2}}.
\end{equation*}
Apparently, $(U,d(\cdot,\cdot))$ becomes a complete metric space.
Suppose $\xi^{*}$ is optimal and, associated with $\xi^{*}$,
 the pair $(X^{*}(\cdot),$ $q^{*}(\cdot))$ is the corresponding state processes of Eq. (\ref{6}).
Because $U$ is convex, for every $\xi$, the following variational control $\xi^{\rho}$ is also in $U$:
\begin{equation*}
  \xi^{\rho}:= \xi^{*} + \rho(\xi - \xi^{*}), \ \ 0\leq \rho \leq 1.
\end{equation*}
Denote the solution to Eq. (\ref{6}) associated with $\xi=\xi^{\rho}$ by $(X^{\rho}(\cdot),q^{\rho}(\cdot))$.
And denote by $(\widehat{X}(\cdot),\widehat{q}(\cdot))$ the solution of following variational equation:
\begin{equation}\label{8}
  \begin{cases}
   -d\widehat{X}(t) = [f^{*}_{x}(t)\widehat{X}(t)
     + f^{*}_{x_{\delta}}(t)\widehat{X}(t-\delta) + f^{*}_{q}(t)\widehat{q}(t)]dt - \widehat{q}(t) dW(t), \ 0\leq t\leq T;\\
    \widehat{X}(T) = \xi-\xi^{*}, \ \widehat{X}(t)=0, \ \ \ \ \ \ \ \ \ \ \ \ \ \ \ \ \ \ \ \ \ \ \ \ \ \ \ \ \ \ \ \ \ \ \ \ \ \ \ \ \ \ \ \ \ \ \ \ \ \ \ \ -\delta\leq t< 0.
  \end{cases}
\end{equation}

\begin{remark}
It is easy to know that (\ref{8}) is a linear time-delayed BSDE.
By Proposition \ref{2}, under conditions (H3.1)-(H3.3), Eq. (\ref{8}) has a unique adapted solution in $L_{\mathbb{F}}^2(-\delta,T;\mathbb{R}^{n})\times L_{\mathbb{F}}^2(0,T;\mathbb{R}^{n\times d})$.
\end{remark}

\begin{lemma}\label{11}
Under assumptions (H3.1)-(H3.3), one has
\[\begin{split}
 \lim_{\rho\rightarrow 0}\sup_{0\leq t\leq T} E|\widetilde{X}^{\rho}(t)|^{2} = 0, \\
 \lim_{\rho\rightarrow 0} E\int_0^T |\widetilde{q}^{\rho}(t)|^{2} dt = 0,
\end{split}\]
where
\begin{equation*}
  \widetilde{X}^{\rho}(t) = \frac{X^{\rho}(t) - X^{*}(t)}{\rho} - \widehat{X}(t), \ \ \ \
  \widetilde{q}^{\rho}(t) = \frac{q^{\rho}(t) - q^{*}(t)}{\rho} - \widehat{q}(t).
\end{equation*}
\end{lemma}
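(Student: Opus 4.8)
The plan is to derive the linear time-delayed BSDE solved by the pair $(\widetilde{X}^\rho(\cdot),\widetilde{q}^\rho(\cdot))$ and then apply the a~priori estimate of Proposition~\ref{2}. First I would subtract the equation~(\ref{6}) for $(X^*,q^*)$ from the one for $(X^\rho,q^\rho)$, divide by $\rho$, and subtract the variational equation~(\ref{8}). Using the terminal data $X^\rho(T)=\xi^\rho=\xi^*+\rho(\xi-\xi^*)$ and $\widehat{X}(T)=\xi-\xi^*$, the terminal value of $\widetilde{X}^\rho$ is exactly $0$; likewise $\widetilde{X}^\rho(t)=0$ on $[-\delta,0)$ since $X^\rho=X^*=\eta$ there. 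For the driver, a first-order Taylor expansion with integral remainder gives
\begin{equation*}
 \frac{f^\rho(t)-f^*(t)}{\rho}
  = \bar f^{\rho}_{x}(t)\,\frac{X^\rho(t)-X^*(t)}{\rho}
  + \bar f^{\rho}_{x_\delta}(t)\,\frac{X^\rho(t-\delta)-X^*(t-\delta)}{\rho}
  + \bar f^{\rho}_{q}(t)\,\frac{q^\rho(t)-q^*(t)}{\rho},
\end{equation*}
where $\bar f^{\rho}_{\varphi}(t)=\int_0^1 f_\varphi\big(t,X^*(t)+\lambda(X^\rho(t)-X^*(t)),X^*(t-\delta)+\lambda(X^\rho(t-\delta)-X^*(t-\delta)),q^*(t)+\lambda(q^\rho(t)-q^*(t))\big)\,d\lambda$. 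Substituting $\tfrac{X^\rho-X^*}{\rho}=\widetilde{X}^\rho+\widehat{X}$ and $\tfrac{q^\rho-q^*}{\rho}=\widetilde{q}^\rho+\widehat{q}$, I obtain that $(\widetilde{X}^\rho,\widetilde{q}^\rho)$ solves
\begin{equation*}
 \begin{cases}
 -d\widetilde{X}^\rho(t)=\big[\bar f^{\rho}_{x}(t)\widetilde{X}^\rho(t)+\bar f^{\rho}_{x_\delta}(t)\widetilde{X}^\rho(t-\delta)+\bar f^{\rho}_{q}(t)\widetilde{q}^\rho(t)+I^\rho(t)\big]dt-\widetilde{q}^\rho(t)\,dW(t),\\
 \widetilde{X}^\rho(T)=0,\ \widetilde{X}^\rho(t)=0,\ \ -\delta\le t<0,
 \end{cases}
\end{equation*}
where the inhomogeneous term collects the coefficient differences,
\begin{equation*}
 I^\rho(t)=\big(\bar f^{\rho}_{x}(t)-f^*_x(t)\big)\widehat{X}(t)+\big(\bar f^{\rho}_{x_\delta}(t)-f^*_{x_\delta}(t)\big)\widehat{X}(t-\delta)+\big(\bar f^{\rho}_{q}(t)-f^*_q(t)\big)\widehat{q}(t).
\end{equation*}

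Next, since the derivatives of $f$ are bounded by (H3.1), the coefficients $\bar f^{\rho}_{x},\bar f^{\rho}_{x_\delta},\bar f^{\rho}_{q}$ are uniformly bounded, so the displayed system is a linear time-delayed BSDE with Lipschitz driver whose free term is $I^\rho$ and whose terminal value vanishes. Applying the estimate of Proposition~\ref{2} (valid for $\delta$ small) then yields
\begin{equation*}
 E\Big[\sup_{0\le t\le T}|\widetilde{X}^\rho(t)|^2+\int_0^T|\widetilde{q}^\rho(t)|^2\,dt\Big]\le C\,E\int_0^T|I^\rho(t)|^2\,dt,
\end{equation*}
so the whole matter reduces to showing $E\int_0^T|I^\rho(t)|^2\,dt\to0$ as $\rho\to0$.

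For this last step I would first record the stability estimate: by the second inequality of Proposition~\ref{2},
\begin{equation*}
 E\Big[\sup_{0\le t\le T}|X^\rho(t)-X^*(t)|^2+\tfrac12\int_0^T|q^\rho(t)-q^*(t)|^2\,dt\Big]\le C\,E|\xi^\rho-\xi^*|^2=C\rho^2\,E|\xi-\xi^*|^2,
\end{equation*}
so $X^\rho\to X^*$ and $q^\rho\to q^*$ in $L^2$ as $\rho\to0$. Along any sequence $\rho_n\to0$ one may pass to a subsequence converging $dt\times dP$-a.e.; by continuity of the derivatives of $f$ the interpolated arguments converge to $(X^*,X^*(\cdot-\delta),q^*)$, whence $\bar f^{\rho_n}_\varphi\to f^*_\varphi$ a.e. and thus $I^{\rho_n}\to0$ a.e. Since $|I^{\rho_n}(t)|^2\le C\big(|\widehat{X}(t)|^2+|\widehat{X}(t-\delta)|^2+|\widehat{q}(t)|^2\big)\in L^1(dt\times dP)$, dominated convergence gives $E\int_0^T|I^{\rho_n}|^2\,dt\to0$ along the subsequence; as every sequence admits such a subsequence, the full limit $E\int_0^T|I^\rho|^2\,dt\to0$ follows, and the two claimed limits are established.

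The hard part will be the treatment of the free term $I^\rho$: the coefficient differences $\bar f^{\rho}_\varphi-f^*_\varphi$ are evaluated along random interpolated trajectories, so their convergence to $0$ holds only $dt\times dP$-a.e. and only along subsequences; the uniform boundedness of the derivatives supplies the dominating function that upgrades this into $L^2$ convergence of $I^\rho$, which is exactly what the Proposition~\ref{2} estimate consumes. The delayed term $\widehat{X}(t-\delta)$ needs no special care, since it is absorbed by the same time-delayed BSDE estimate and vanishes on $[-\delta,0)$.
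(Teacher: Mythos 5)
Your proposal is correct and follows exactly the route the paper indicates for this lemma (which it leaves unproved, citing Lemma 3.1 of Chen and Huang): write the linear time-delayed BSDE satisfied by $(\widetilde{X}^{\rho},\widetilde{q}^{\rho})$ with vanishing terminal and past data, apply the a priori estimate of Proposition~\ref{2} to reduce everything to the free term $I^{\rho}$, and kill $I^{\rho}$ in $L^{2}$ via the stability estimate, a.e.\ subsequence extraction, and dominated convergence with the bounded derivatives supplying the dominating function. No gaps; this is the intended argument.
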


\begin{remark}
Since the proof of Lemma \ref{11} above is the same as that of Lemma 3.1 in Chen and Huang \cite{Li},
for simplicity of presentation, we only present the main result and omit the detailed proof.
In fact, it is straightforward to prove Lemma \ref{11} by applying Proposition \ref{2}, Taylor expansion and the Lebesgue dominated convergence theorem.
\end{remark}

\subsection{Variational inequality}

We solve the initial constraint $X^{\xi}(0)=a$ and obtain a variational inequality in this subsection.

Given the optimal $\xi^{*}$,
for a constat $\varepsilon>0$, define $F\varepsilon(\cdot):U\rightarrow \mathbb{R}$ as follows:
\begin{equation*}
  F_{\varepsilon}(\xi)= \left\{ |X^{\xi}(0)-a|^{2} + \big(\max (0, \phi(\xi) - \phi(\xi^{*}) + \varepsilon)\big)^{2} \right\}^{\frac{1}{2}}.
\end{equation*}

\begin{remark}
One can test that the functions $|X^{\xi}(0)-a|^{2}$ and $\phi(\xi)$ are both continuous in their argument  $\xi$.
Hence, $F_{\varepsilon}$, defined on $U$, is also a continuous function in its argument $\xi$.
\end{remark}

\begin{theorem}
Under assumptions (H3.1)-(H3.3), suppose that $\xi^{*}$ is an optimal solution to Problem B,
so we have $h_{0}\in \mathbb{R}^{+}$ and $h_{1}\in \mathbb{R}^{n}$ satisfying $|h_{0}|+|h_{1}|\neq 0$,
so that for every $\xi\in U$, we have the following variational inequality:
\begin{equation}\label{10}
  \big<h_{1},\widehat{X}(0)\big> + h_{0} \big<\phi_{x}(\xi^{*}), \xi - \xi^{*}\big> \geq 0,
\end{equation}
where $\widehat{X}(0)$ is the solution to Eq. (\ref{8}) at time 0.
\end{theorem}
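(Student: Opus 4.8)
The plan is to derive the variational inequality (\ref{10}) from Ekeland's variational principle (Proposition \ref{9}), which trades the awkward initial constraint $X^\xi(0)=a$ for a family of penalized unconstrained problems and then produces the multipliers $(h_0,h_1)$ by a compactness argument in the limit. First I would verify the hypotheses of Proposition \ref{9} for $F_\varepsilon$ on the complete metric space $(U,d)$. By the preceding Remark $F_\varepsilon$ is continuous; since $\xi^*$ is feasible ($X^{\xi^*}(0)=a$) the first term vanishes and the second reduces to $\varepsilon^2$, so $F_\varepsilon(\xi^*)=\varepsilon$, while optimality of $\xi^*$ gives $F_\varepsilon>0$ on $U$ (in particular $F_\varepsilon(\xi^\varepsilon)>0$, needed to differentiate the square root below). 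As $F_\varepsilon\ge 0$, this yields $F_\varepsilon(\xi^*)\le\inf_{\xi\in U}F_\varepsilon(\xi)+\varepsilon$, and Proposition \ref{9} produces $\xi^\varepsilon\in U$ with $F_\varepsilon(\xi^\varepsilon)\le\varepsilon$, $d(\xi^*,\xi^\varepsilon)\le\varepsilon$ (so $\xi^\varepsilon\to\xi^*$ in $L^2$ as $\varepsilon\to0$), and the perturbed minimality
\[
F_\varepsilon(\xi)+\sqrt{\varepsilon}\,d(\xi,\xi^\varepsilon)\ \ge\ F_\varepsilon(\xi^\varepsilon),\qquad \forall\,\xi\in U.
\]

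Next I would exploit the convexity of $U$. Fixing $\xi\in U$, I insert the perturbation $\xi^\varepsilon_\rho:=\xi^\varepsilon+\rho(\xi-\xi^\varepsilon)\in U$ into the last inequality, use $d(\xi^\varepsilon_\rho,\xi^\varepsilon)=\rho\,d(\xi,\xi^\varepsilon)$, divide by $\rho>0$ and let $\rho\to0^+$. Because $F_\varepsilon(\xi^\varepsilon)>0$ the square root is differentiable there, and the only nonsmooth ingredient enters through $\big((\phi(\xi)-\phi(\xi^*)+\varepsilon)^+\big)^2$, which is of class $C^1$; the sensitivity of $\xi\mapsto X^\xi(0)$ along the perturbation is exactly $\widehat X^\varepsilon(0)$, the value at time $0$ of the variational equation (\ref{8}) linearized at $\xi^\varepsilon$ with terminal datum $\xi-\xi^\varepsilon$, the corresponding difference quotient converging by Lemma \ref{11}. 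Writing $X^\varepsilon$ for the state attached to $\xi^\varepsilon$, this gives
\[
\frac{\big\langle X^{\varepsilon}(0)-a,\ \widehat X^{\varepsilon}(0)\big\rangle+\big(\phi(\xi^\varepsilon)-\phi(\xi^*)+\varepsilon\big)^+\big\langle\phi_x(\xi^\varepsilon),\ \xi-\xi^\varepsilon\big\rangle}{F_\varepsilon(\xi^\varepsilon)}\ \ge\ -\sqrt{\varepsilon}\,d(\xi,\xi^\varepsilon).
\]

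I would then normalize the multipliers by setting $h_1^\varepsilon:=\big(X^{\varepsilon}(0)-a\big)/F_\varepsilon(\xi^\varepsilon)$ and $h_0^\varepsilon:=\big(\phi(\xi^\varepsilon)-\phi(\xi^*)+\varepsilon\big)^+/F_\varepsilon(\xi^\varepsilon)$, so that $h_0^\varepsilon\ge0$ and $(h_0^\varepsilon)^2+|h_1^\varepsilon|^2=1$, i.e. $(h_0^\varepsilon,h_1^\varepsilon)$ lies on the unit sphere of $\mathbb{R}\times\mathbb{R}^n$. Compactness of that sphere gives a subsequence $\varepsilon\to0$ along which $(h_0^\varepsilon,h_1^\varepsilon)\to(h_0,h_1)$ with $h_0\in\mathbb{R}^+$ and $|h_0|+|h_1|\neq0$. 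Passing to the limit in the displayed inequality then yields (\ref{10}), since $\sqrt{\varepsilon}\,d(\xi,\xi^\varepsilon)\to0$ and $\phi_x(\xi^\varepsilon)\to\phi_x(\xi^*)$ by continuity.

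The main obstacle is the convergence $\widehat X^\varepsilon(0)\to\widehat X(0)$, where $\widehat X$ solves (\ref{8}) linearized at $\xi^*$: one must establish stability of the linear time-delayed BSDE (\ref{8}) under the \emph{simultaneous} perturbation of its terminal datum $\xi-\xi^\varepsilon\to\xi-\xi^*$ and of its random, time-dependent coefficients $f^\varepsilon_x,f^\varepsilon_{x_\delta},f^\varepsilon_q\to f^*_x,f^*_{x_\delta},f^*_q$. The latter I would obtain from $\xi^\varepsilon\to\xi^*$, the continuous dependence of the state $X^\varepsilon$ on $\xi^\varepsilon$ (Proposition \ref{2}), and the boundedness and continuity of the derivatives of $f$ guaranteed by (H3.1)-(H3.3); the quantitative estimate in Proposition \ref{2} is precisely what controls $\widehat X^\varepsilon(0)-\widehat X(0)$ in $L^2$ and makes the limit passage rigorous.
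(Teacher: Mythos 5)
Your proposal is correct and follows essentially the same route as the paper: apply Ekeland's variational principle to $F_\varepsilon$, perturb convexly by $\xi^\varepsilon_\rho=\xi^\varepsilon+\rho(\xi-\xi^\varepsilon)$, differentiate at $\rho=0$ using the variational equation (\ref{8}), normalize $(h_0^\varepsilon,h_1^\varepsilon)$ on the unit sphere, and pass to the limit $\varepsilon\to0$ via compactness and the stability estimate of Proposition \ref{2}. The only difference is cosmetic: you exploit that $t\mapsto\big(\max(0,t)\big)^2$ is $C^1$ to differentiate the penalty term in one stroke, whereas the paper splits into two cases according to the sign of $\phi(\xi^\varepsilon_\rho)-\phi(\xi^*)+\varepsilon$; both yield the same $h_0^\varepsilon=\big(\phi(\xi^\varepsilon)-\phi(\xi^*)+\varepsilon\big)^{+}/F_\varepsilon(\xi^\varepsilon)$.
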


\begin{proof}
We can check the following properties by the definition,
\begin{flalign*}
\begin{split}
\ \ \ \ \ \ \ \ \ &F_{\varepsilon}(\xi^{*})=\varepsilon;\\
\ \ \ \ \ \ \ \ \ &F_{\varepsilon}(\xi)> 0,  \ \ \forall \xi\in U;\\
\ \ \ \ \ \ \ \ \ &F_{\varepsilon}(\xi^{*})\leq \inf\limits_{\xi\in U} F_{\varepsilon}(\xi) +\varepsilon.
\end{split}&
\end{flalign*}
Therefore, from Proposition \ref{9} (Ekeland's variational principle), there exists $\xi^{\varepsilon}\in U$ satisfying:
\begin{flalign*}
\begin{split}
\ \ \ \ \ \ \ \ \ &(i) \ F_{\varepsilon}(\xi^{\varepsilon})\leq F_{\varepsilon}(\xi^{*});\\
\ \ \ \ \ \ \ \ \ &(ii) \ d(\xi^{\varepsilon},\xi^{*})\leq \sqrt{\varepsilon};\\
\ \ \ \ \ \ \ \ \ &(iii) \ F_{\varepsilon}(\xi)+\sqrt{\varepsilon}d(\xi,\xi^{\varepsilon})\geq F_{\varepsilon}(\xi^{\varepsilon}), \ \ \forall\xi\in U.
\end{split}&
\end{flalign*}
For every $\xi\in U$, denote $\xi_{\rho}^{\varepsilon}:= \xi^{\varepsilon}+ \rho(\xi - \xi^{\varepsilon}), \ 0\leq \rho\leq 1.$
Let $(X_{\rho}^{\varepsilon}(\cdot),q_{\rho}^{\varepsilon}(\cdot))$ (resp. $X^{\varepsilon}(\cdot),q^{\varepsilon}(\cdot)$) be the solution to (\ref{6}) under $\xi_{\rho}^{\varepsilon}$ (resp. $\xi^{\varepsilon}$), and let $(\widehat{X}^{\varepsilon}(\cdot),\widehat{q}^{\varepsilon}(\cdot))$ be the solution to (\ref{8}) when $\xi^{\varepsilon}$ is replaced by $\xi^{*}$.
 Hence, applying the item (iii) above, one obtains
\begin{equation}\label{12}
  F_{\varepsilon}(\xi_{\rho}^{\varepsilon}) - F_{\varepsilon}(\xi^{\varepsilon}) +\sqrt{\varepsilon}d(\xi_{\rho}^{\varepsilon},\xi^{\varepsilon})\geq 0.
\end{equation}
On the other hand, similar to Lemma \ref{11}, one concludes
\begin{equation*}
 \lim_{\rho\rightarrow 0}\sup_{0\leq t\leq T}
      E \left[ \rho^{-1}\big( X_{\rho}^{\varepsilon}(t) - X^{\varepsilon}(t)\big) - \widehat{X}^{\varepsilon}(t) \right]= 0.
\end{equation*}
Thus
\begin{equation*}
   X_{\rho}^{\varepsilon}(0) - X^{\varepsilon}(0)= \rho\widehat{X}^{\varepsilon}(0) + o(\rho),
\end{equation*}
which leads to the following expansion:
\begin{equation*}
   \left|X_{\rho}^{\varepsilon}(0)-a\right|^{2} - \left|X^{\varepsilon}(0)-a\right|^{2}
   = 2\rho \big<X^{\varepsilon}(0)-a, \widehat{X}^{\varepsilon}(0)\big> + o(\rho).
\end{equation*}
Moreover,
\begin{equation*}
      |\phi(\xi_{\rho}^{\varepsilon}) - \phi(\xi^{*}) + \varepsilon|^{2}
    - |\phi(\xi^{\varepsilon}) - \phi(\xi^{*}) + \varepsilon|^{2}
   = 2\rho [\phi(\xi^{\varepsilon}) - \phi(\xi^{*}) + \varepsilon] \big<\phi_{x}(\xi^{\varepsilon}),\xi-\xi^{\varepsilon}\big> + o(\rho).
\end{equation*}
In the next, we study the following two cases for given $\varepsilon>0$.

Case 1: There is $\rho_{0}>0$ so that for every $\rho\in (0,\rho_{0})$,
\begin{equation*}
  \phi(\xi_{\rho}^{\varepsilon}) -  \phi(\xi^{*}) + \varepsilon \geq 0.
\end{equation*}
We see that
\[\begin{split}
  &\lim_{\rho\rightarrow 0}\frac{F_{\varepsilon}(\xi_{\rho}^{\varepsilon}) - F_{\varepsilon}(\xi^{\varepsilon})}{\rho}\\
 =&\lim_{\rho\rightarrow 0}\frac{1}{F_{\varepsilon}(\xi_{\rho}^{\varepsilon})
   + F_{\varepsilon}(\xi^{\varepsilon})}\frac{F_{\varepsilon}^{2}(\xi_{\rho}^{\varepsilon}) - F_{\varepsilon}^{2}(\xi^{\varepsilon})}{\rho}\\
 =&\frac{1}{F_{\varepsilon}(\xi^{\varepsilon})} \left\{ \big<X^{\varepsilon}(0)-a,\widehat{X}^{\varepsilon}(0)\big>
   +\big[\phi(\xi^{\varepsilon}) - \phi(\xi^{*}) + \varepsilon\big] \cdot \big<\phi_{x}(\xi^{\varepsilon}),\xi-\xi^{\varepsilon}\big>\right\}.
\end{split}\]
Now, using $\rho$ to split (\ref{12}) and letting $\rho$ to 0, one has
\begin{equation}\label{13}
  h_{0}^{\varepsilon} \big<\phi_{x}(\xi^{\varepsilon}),\xi-\xi^{\varepsilon}\big> + \big< h_{1}^{\varepsilon},\widehat{X}^{\varepsilon}(0) \big>
  \geq -\sqrt{\varepsilon} [E|\xi - \xi^{\varepsilon}|^{2}]^{\frac{1}{2}},
\end{equation}
where
\begin{equation*}
  h_{0}^{\varepsilon} = \frac{1}{F_{\varepsilon}(\xi^{\varepsilon})} \cdot [\phi(\xi^{\varepsilon}) - \phi(\xi^{*}) + \varepsilon\big]\geq 0, \ \ \ \
  h_{1}^{\varepsilon} = \frac{1}{F_{\varepsilon}(\xi^{\varepsilon})} \big<X^{\varepsilon}(0)-a \big>.
\end{equation*}
Case 2: There is a positive series $\{\rho_{n}\}$ satisfying $\rho_{n}\rightarrow 0$, so that
\begin{equation*}
  \phi(\xi_{\rho_{n}}^{\varepsilon}) -  \phi(\xi^{*}) + \varepsilon \leq 0.
\end{equation*}
From the definition of $F_{\varepsilon}$, for large $n$,
 $F_{\varepsilon}(\xi_{\rho_{n}}^{\varepsilon})= \left\{ |X_{\rho_{n}}^{\varepsilon}(0)-a|^{2} \right\}^{\frac{1}{2}}$.
Owing to the continuity of $F_{\varepsilon}(\cdot)$, one has
$F_{\varepsilon}(\xi^{\varepsilon})= \left\{ |X^{\varepsilon}(0)-a|^{2} \right\}^{\frac{1}{2}}$.

Moreover,
\begin{align*}
  \lim_{n\rightarrow 0}\frac{F_{\varepsilon}(\xi_{\rho_{n}}^{\varepsilon}) - F_{\varepsilon}(\xi^{\varepsilon})}{\rho}
 =&\lim_{n\rightarrow 0}\frac{1}{F_{\varepsilon}(\xi_{\rho_{n}}^{\varepsilon})
   + F_{\varepsilon}(\xi^{\varepsilon})} \frac{F_{\varepsilon}^{2}(\xi_{\rho_{n}}^{\varepsilon}) - F_{\varepsilon}^{2}(\xi^{\varepsilon})}{\rho}\\
=&\frac{\big<X^{\varepsilon}(0)-a,\widehat{X}^{\varepsilon}(0) \big>}{F_{\varepsilon}(\xi^{\varepsilon})}.
\end{align*}
From (\ref{12}), the same as in Case 1,
\begin{equation}\label{14}
  \big< h_{1}^{\varepsilon},\widehat{X}^{\varepsilon}(0) \big> \geq -\sqrt{\varepsilon} [E|\xi - \xi^{\varepsilon}|^{2}]^{\frac{1}{2}},
\end{equation}
where
\begin{equation*}
  h_{0}^{\varepsilon} = 0, \ \ \ \
  h_{1}^{\varepsilon} = \frac{1}{F_{\varepsilon}(\xi^{\varepsilon})} \big<X^{\varepsilon}(0)-a \big>.
\end{equation*}
For both cases, in summary, from the definition of $F_{\varepsilon}(\cdot)$, one has $ h_{0}^{\varepsilon}\geq 0$ and
\begin{equation*}
  |h_{0}^{\varepsilon}|^{2} +  |h_{1}^{\varepsilon}|^{2} =1.
\end{equation*}
Therefore, there exists a convergent subsequence of $(h_{0}^{\varepsilon},h_{1}^{\varepsilon})$ whose limit is denoted by $(h_{0},h_{1})$.

Due to $d(\xi^{\varepsilon},\xi^{*})\leq \sqrt{\varepsilon}$, we have
$\xi^{\varepsilon}\rightarrow  \xi^{*}, \ \ as \ \ \varepsilon\rightarrow 0.$
Then from the estimate of Proposition \ref{2}, we see that $\widehat{X}^{\varepsilon}(0)\rightarrow \widehat{X}(0)$ as $\varepsilon\rightarrow 0$.
Thus (\ref{10}) holds. The desired result is proved now.
\end{proof}

By using similar analysis, when $l(t,x,q)\neq0$, the following variational inequality can be obtained.
\begin{theorem}
Let (H3.1)-(H3.3) hold. Suppose that $\xi^{*}$ is an optimal solution of Problem B.
Then we have $h_{0}\in \mathbb{R}^{+}$, $h_{1}\in \mathbb{R}^{n}$ satisfying $|h_{0}|+|h_{1}|\neq 0$,
so that for every $\xi\in U$,
we have the variational inequality:
\begin{equation}\label{16}
  \big<h_{1},\widehat{X}(0)\big> + h_{0} \big<\phi_{x}(\xi^{*}), \xi - \xi^{*}\big>
   + h_{0}\int_0^T \big<l_{x}^{*}(t), \widehat{X}(t)\big> dt +  h_{0}\int_0^T \big<l_{q}^{*}(t), \widehat{q}(t)\big> dt \geq 0,
\end{equation}
where $l_{\varphi}^{*}(t)=l_{\varphi}(t,X^{*}(t),q^{*}(t))$ denotes the partial derivative of $l^{*}$ at $\varphi$ with
$\varphi= x,q,$ respectively, and $(\widehat{X}(\cdot),\widehat{q}(\cdot))$ is the solution to variation equation (\ref{8}).
\end{theorem}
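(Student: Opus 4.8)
The plan is to rerun the Ekeland argument of the preceding theorem, but with the penalty functional built from the \emph{full} cost $J$ rather than from the terminal cost $\phi$ alone. Concretely, for $\varepsilon>0$ I would set
\begin{equation*}
  F_{\varepsilon}(\xi)= \left\{ |X^{\xi}(0)-a|^{2} + \big(\max(0,\, J(\xi)-J(\xi^{*})+\varepsilon)\big)^{2} \right\}^{\frac{1}{2}},
\end{equation*}
where $J(\xi)=E[\int_0^T l(t,X(t),q(t))\,dt+\phi(\xi)]$ now carries the running cost. The two summands are continuous in $\xi$: continuity of $\xi\mapsto X^{\xi}(0)$ follows from the stability estimate of Proposition \ref{2}, and continuity of $\xi\mapsto J(\xi)$ from (H3.1)--(H3.2) together with that same estimate. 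Hence $F_{\varepsilon}$ is a proper continuous functional on the complete metric space $(U,d)$, and optimality of $\xi^{*}$ yields $F_{\varepsilon}(\xi^{*})=\varepsilon$, $F_{\varepsilon}>0$ on $U$, and $F_{\varepsilon}(\xi^{*})\le\inf_{U}F_{\varepsilon}+\varepsilon$, which are exactly the hypotheses of Proposition \ref{9}. Applying Ekeland's principle produces $\xi^{\varepsilon}\in U$ with $d(\xi^{\varepsilon},\xi^{*})\le\sqrt{\varepsilon}$ and the perturbation inequality (\ref{12}) along the convex direction $\xi_{\rho}^{\varepsilon}=\xi^{\varepsilon}+\rho(\xi-\xi^{\varepsilon})$.

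The only genuinely new computation relative to the previous theorem is the first-order expansion of the running-cost part of $J$. Using the analogue of Lemma \ref{11} at the base point $\xi^{\varepsilon}$, namely $\rho^{-1}(X_{\rho}^{\varepsilon}-X^{\varepsilon})\to\widehat{X}^{\varepsilon}$ and $\rho^{-1}(q_{\rho}^{\varepsilon}-q^{\varepsilon})\to\widehat{q}^{\varepsilon}$ in the relevant $L^{2}$-senses, a Taylor expansion of $l$ gives
\begin{equation*}
  J(\xi_{\rho}^{\varepsilon})-J(\xi^{\varepsilon})
  = \rho\Big\{ E\!\int_0^T\!\big[\langle l_{x}^{\varepsilon}(t),\widehat{X}^{\varepsilon}(t)\rangle+\langle l_{q}^{\varepsilon}(t),\widehat{q}^{\varepsilon}(t)\rangle\big]dt + \langle\phi_{x}(\xi^{\varepsilon}),\xi-\xi^{\varepsilon}\rangle\Big\}+o(\rho),
\end{equation*}
while the terminal expansion $|X_{\rho}^{\varepsilon}(0)-a|^{2}-|X^{\varepsilon}(0)-a|^{2}=2\rho\langle X^{\varepsilon}(0)-a,\widehat{X}^{\varepsilon}(0)\rangle+o(\rho)$ is unchanged.

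From here I would run the same two-case dichotomy as in the previous proof. In Case 1 (where $J(\xi_{\rho}^{\varepsilon})-J(\xi^{*})+\varepsilon\ge0$ for small $\rho$) the squared functional differentiates through both summands; dividing (\ref{12}) by $\rho$ and letting $\rho\to0$ yields
\begin{equation*}
  \langle h_{1}^{\varepsilon},\widehat{X}^{\varepsilon}(0)\rangle + h_{0}^{\varepsilon}\langle\phi_{x}(\xi^{\varepsilon}),\xi-\xi^{\varepsilon}\rangle + h_{0}^{\varepsilon}E\!\int_0^T\!\big[\langle l_{x}^{\varepsilon},\widehat{X}^{\varepsilon}\rangle+\langle l_{q}^{\varepsilon},\widehat{q}^{\varepsilon}\rangle\big]dt \ge -\sqrt{\varepsilon}\,[E|\xi-\xi^{\varepsilon}|^{2}]^{\frac{1}{2}},
\end{equation*}
with $h_{0}^{\varepsilon},h_{1}^{\varepsilon}$ defined as before but with $J$ in place of $\phi$; Case 2 forces $h_{0}^{\varepsilon}=0$ and drops the cost terms. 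In both cases $|h_{0}^{\varepsilon}|^{2}+|h_{1}^{\varepsilon}|^{2}=1$, so a subsequence of $(h_{0}^{\varepsilon},h_{1}^{\varepsilon})$ converges to some $(h_{0},h_{1})$ with $|h_{0}|+|h_{1}|\ne0$. Finally $d(\xi^{\varepsilon},\xi^{*})\le\sqrt{\varepsilon}$ gives $\xi^{\varepsilon}\to\xi^{*}$, and the stability estimate of Proposition \ref{2} propagates this to $\widehat{X}^{\varepsilon}\to\widehat{X}$, $\widehat{q}^{\varepsilon}\to\widehat{q}$ and $l_{\varphi}^{\varepsilon}\to l_{\varphi}^{*}$; passing to the limit yields (\ref{16}).

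The main obstacle I expect is justifying that the Taylor remainder in the running-cost expansion is genuinely $o(\rho)$ and survives the limit. Because (H3.2) only bounds $l_{x},l_{q}$ by $C(1+|x|+|u|)$ rather than by a constant, the linearization must be controlled in $L^{2}$, with the cross terms handled by the Cauchy--Schwarz inequality together with the $L^{2}$-convergences $\rho^{-1}(X_{\rho}^{\varepsilon}-X^{\varepsilon})\to\widehat{X}^{\varepsilon}$ and $\rho^{-1}(q_{\rho}^{\varepsilon}-q^{\varepsilon})\to\widehat{q}^{\varepsilon}$. This is precisely where the a priori bounds of Proposition \ref{2}, uniform integrability, and the dominated convergence theorem do the real work, exactly as in the suppressed proof of Lemma \ref{11}.
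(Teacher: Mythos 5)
Your proposal is correct and follows exactly the route the paper intends: the paper proves the $l\equiv 0$ case in detail and then asserts this theorem "by similar analysis," which is precisely your construction of the Ekeland penalty with the full cost $J$ in place of $\phi$, the first-order expansion of the running-cost integral via the Lemma \ref{11}-type convergences, the same two-case dichotomy, normalization $|h_0^\varepsilon|^2+|h_1^\varepsilon|^2=1$, and passage to the limit $\varepsilon\to 0$. Your closing remarks on controlling the Taylor remainder in $L^2$ under the linear-growth bounds of (H3.2) supply exactly the detail the paper suppresses, so nothing further is needed.
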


\subsection{Maximum principle}

For the sake of establishing the maximum principle, in this part,
as the dual equation of Eq. (\ref{8}), the following equation is introduced:
\begin{equation}\label{17}
  \begin{cases}
   dm(t) = \left\{f^{*}_{x}(t)^{T}m(t) + E^{\mathcal{F}_{t}}\big[(f^{*}_{x_{\delta}}|_{t+\delta})^{T}m(t+\delta)\big] + h_{0}l^{*}_{x}(t)\right\} dt\\
 \ \ \ \ \ \ \ \ \ \ \ \   + [f^{*}_{q}(t)^{T}m(t) + h_{0}l^{*}_{q}(t)] dW(t), \ \ \ \ \ \ \ \ \ \ \ \ \ \ \
    0\leq t\leq T;\\
    m(0) =h_{1}, \ \ m(t)=0, \ \ \ \ \ \ \ \ \ \ \ \ \ \ \ \ \ \ \ \ \ \ \ \ \ \ \ \ \ \ \ \ \ \ \ \ \  T< t\leq T+\delta.
  \end{cases}
\end{equation}

\begin{remark}
In Eq. (\ref{17}), $f^{*}_{x_{\delta}}|_{t+\delta}$ represents the value of $f^{*}_{x_{\delta}}$ when $t$ is replaced by $t+\delta$.
\end{remark}

\begin{remark}
It is easy to see that (\ref{17}) is a linear time-advanced SDE.
 By Proposition \ref{4}, under conditions (H3.1)-(H3.3),
 Eq. (\ref{17}) admits the unique adapted solution in $L_{\mathbb{F}}^2(0,T+\delta;\mathbb{R}^{n})$.
\end{remark}

\begin{theorem}\label{18}
Let (H3.1)-(H3.3) hold.
If $\xi^{*}$ is optimal to Problem B with $(X^{*}(\cdot),$ $q^{*}(\cdot))$ being the corresponding state of Eq. (\ref{6}),
then we have $h_{0}\in \mathbb{R}^{+}$ and $h_{1}\in \mathbb{R}^{n}$ satisfying $|h_{0}|+|h_{1}|\neq 0$, so that for every $\eta\in U$,
\begin{equation}
 \big<m(T) + h_{0}\phi_{x}(\xi^{*}), \eta - \xi^{*}\big> \geq 0, \ \ a.s.
\end{equation}
where $m(\cdot)$ is the solution of Eq. (\ref{17}).
\end{theorem}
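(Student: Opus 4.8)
The plan is to read off the asserted pointwise inequality from the variational inequality (\ref{16}) by pairing the time-delayed variational equation (\ref{8}) against the time-advanced adjoint equation (\ref{17}), and then to promote the resulting inequality-in-expectation to an almost-sure statement through a localization over $\mathcal{F}_T$.

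First I would apply It\^o's formula to $\langle m(t),\widehat{X}(t)\rangle$ on $[0,T]$, insert the dynamics of $m$ from (\ref{17}) and of $\widehat{X}$ from (\ref{8}), and take expectations so that the stochastic integrals drop out. The adjoint equation is built precisely so that two cancellations occur: the contributions $\langle m,f^*_x\widehat{X}\rangle$ coming from the drift of $m$ and from the drift of $\widehat{X}$ cancel, and the contribution $\langle m,f^*_q\widehat{q}\rangle$ coming from the drift of $\widehat{X}$ cancels against the It\^o cross-variation of the two diffusion coefficients $f^{*}_{q}(t)^{T}m(t)+h_{0}l^{*}_{q}(t)$ and $\widehat{q}(t)$. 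After these cancellations the only running terms left are $h_0\langle l^*_x,\widehat{X}\rangle$ and $h_0\langle l^*_q,\widehat{q}\rangle$ together with the two delay terms carrying $f^*_{x_\delta}$.

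The decisive step is to show that the two delay terms annihilate each other. Since $\widehat{X}(t)$ is $\mathcal{F}_t$-measurable, the conditioning $E^{\mathcal{F}_t}$ in (\ref{17}) may be removed under the expectation; the substitution $s=t+\delta$ then recasts the advanced term as $E\int_\delta^{T+\delta}\langle m(s),f^*_{x_\delta}(s)\widehat{X}(s-\delta)\rangle\,ds$, which agrees with the delayed term $E\int_0^T\langle m(t),f^*_{x_\delta}(t)\widehat{X}(t-\delta)\rangle\,dt$ except for the boundary strips $(T,T+\delta]$ and $[0,\delta)$. Both strips contribute nothing, because $m\equiv 0$ on $(T,T+\delta]$ and $\widehat{X}\equiv 0$ on $[-\delta,0)$. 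Using the boundary data $\widehat{X}(T)=\xi-\xi^*$ and $m(0)=h_1$, the duality identity becomes
\begin{equation*}
E\langle m(T),\xi-\xi^*\rangle-\langle h_1,\widehat{X}(0)\rangle
= h_0 E\int_0^T\langle l^*_x(t),\widehat{X}(t)\rangle\,dt + h_0 E\int_0^T\langle l^*_q(t),\widehat{q}(t)\rangle\,dt.
\end{equation*}

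Finally I would substitute this identity into (\ref{16}): the running-cost integrals and the term $\langle h_1,\widehat{X}(0)\rangle$ cancel, leaving $E\langle m(T)+h_0\phi_x(\xi^*),\xi-\xi^*\rangle\geq 0$ for every $\xi\in U$. To reach the pointwise conclusion, fix $\eta\in U$ and an arbitrary $A\in\mathcal{F}_T$, and test with $\xi=\xi^*+(\eta-\xi^*)\mathbf{1}_A$, which again lies in $U$ since it equals $\eta$ on $A$ and $\xi^*$ on $A^c$, both in $K$ almost surely. This gives $E[\mathbf{1}_A\langle m(T)+h_0\phi_x(\xi^*),\eta-\xi^*\rangle]\geq 0$, and the arbitrariness of $A$ yields the stated a.s.\ inequality. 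I expect the delay-term bookkeeping in the third step to be the main obstacle: one must handle the conditional expectation and the time-shift carefully and verify that the boundary strips vanish, which is exactly where the nonstandard boundary conditions of the delayed and advanced systems are used.
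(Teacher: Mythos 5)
Your proposal is correct and follows essentially the same route as the paper: It\^o's formula applied to $\langle m(t),\widehat{X}(t)\rangle$, cancellation of the $f^*_x$ and $f^*_q$ terms (the latter via the cross-variation), annihilation of the two delay terms through the time-shift $s=t+\delta$ together with the boundary data $m\equiv 0$ on $(T,T+\delta]$ and $\widehat{X}\equiv 0$ on $[-\delta,0)$, and substitution into the variational inequality (\ref{16}). The only difference is that you make explicit the final localization with $\xi=\xi^*+(\eta-\xi^*)\mathbf{1}_A$ for $A\in\mathcal{F}_T$, a step the paper compresses into ``from the arbitrariness of $\xi\in U$''; your version is the more careful one.
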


\begin{proof}
By using It\^{o}'s formula to $ \big<m(t),\widehat{X}(t)\big>$, we obtain
\begin{equation*}
\begin{split}
d\big<m(t),\widehat{X}(t)\big>
 =&-m(t)[f^{*}_{x}(t)\widehat{X}(t) + f^{*}_{x_{\delta}}(t)\widehat{X}(t-\delta) + f^{*}_{q}(t)\widehat{q}(t)] dt\\
  &+\widehat{X}(t)\left\{f^{*}_{x}(t)^{T}m(t) + E^{\mathcal{F}_{t}}\big[(f^{*}_{x_{\delta}}|_{t+\delta})^{T}m(t+\delta)\big] + h_{0}l^{*}_{x}(t)\right\}
    dt\\
  &+\widehat{q}(t)[f^{*}_{q}(t)^{T}m(t) + h_{0}l^{*}_{q}(t)] dt + \{\cdot \cdot \cdot\} dW(t)\\
 =&\left\{E^{\mathcal{F}_{t}}\big[(f^{*}_{x_{\delta}}|_{t+\delta})^{T}m(t+\delta)\big]\widehat{X}(t) -f^{*}_{x_{\delta}}(t)m(t)\widehat{X}(t-\delta)
   \right\} dt\\
  &+h_{0}[l^{*}_{x}(t)\widehat{X}(t) + l^{*}_{q}(t)\widehat{q}(t)] dt + \{\cdot \cdot \cdot\} dW(t).
\end{split}
\end{equation*}
Therefore,
\begin{equation*}
  E[\big<m(T),\widehat{X}(T)\big> - \big<m(0),\widehat{X}(0)\big>] = \Delta_{1} + \Delta_{2},
\end{equation*}
with
\[\begin{split}
&\Delta_{1}=E\int_0^T \big[(f^{*}_{x_{\delta}}|_{t+\delta})^{T}m(t+\delta)\widehat{X}(t) -f^{*}_{x_{\delta}}(t)m(t)\widehat{X}(t-\delta)\big]  dt,\\
&\Delta_{2}=h_{0}E\int_0^T \big[l^{*}_{x}(t)\widehat{X}(t) + l^{*}_{q}(t)\widehat{q}(t)\big]  dt.
\end{split}\]
Paying attention to the terminal and initial conditions, one derives
\[\begin{split}
\Delta_{1}=&E\int_0^T (f^{*}_{x_{\delta}}|_{t+\delta})^{T}m(t+\delta)\widehat{X}(t) dt - E\int_0^T f^{*}_{x_{\delta}}(t)m(t)\widehat{X}(t-\delta) dt\\
=&E\int_T^{T+\delta} f^{*}_{x_{\delta}}(t)^{T}m(t)\widehat{X}(t-\delta) dt-E\int_0^{\delta}f^{*}_{x_{\delta}}(t)m(t)\widehat{X}(t-\delta) dt\\
=&0.
\end{split}\]
Hence
\begin{align*}
 &E[\big<m(T)+h_{0}\phi_{x}(\xi^{*}),\xi - \xi^{*}\big>]\\
 =&E\bigg[\big<h_{1},\widehat{X}(0)\big> + h_{0} \big<\phi_{x}(\xi^{*}), \xi - \xi^{*}\big>
  + h_{0}\int_0^T \big<l_{x}^{*}(t), \widehat{X}(t)\big> dt +  h_{0}\int_0^T \big<l_{q}^{*}(t), \widehat{q}(t)\big> dt \bigg]\geq 0.
\end{align*}
From the arbitrariness of $\xi\in U$, for every $\eta\in U$, we have
\begin{equation*}
 \big<m(T) + h_{0}\phi_{x}(\xi^{*}), \eta - \xi^{*}\big> \geq 0, \ \ a.s.
\end{equation*}
\end{proof}

Now, we let $\partial K$ represent the boundary of $K$, and denote
$$\Omega_{0}:=\{ \omega\in\Omega| \xi^{*}\in \partial K \}.$$
According to Theorem \ref{18}, we directly deduce the following result.

\begin{corollary}
Assume that the assumptions in Theorem \ref{18} hold, then for each $\eta\in K$, we have
\[\begin{split}
\big<&m(T) + h_{0}\phi_{x}(\xi^{*}), \eta - \xi^{*}\big> \geq 0, \ \ a.s. \  on \ \Omega_{0};\\
     &m(T) + h_{0}\phi_{x}(\xi^{*}) = 0,  \ \ a.s. \  on \ \Omega_{0}^{c}.
\end{split}\]
\end{corollary}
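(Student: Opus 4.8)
The plan is to deduce both lines of the corollary from the single variational inequality of Theorem \ref{18} by distinguishing, sample by sample, whether the optimal terminal control $\xi^{*}$ lies in the interior or on the boundary of $K$. Throughout, write $\Lambda := m(T) + h_{0}\phi_{x}(\xi^{*})$, an $\mathcal{F}_{T}$-measurable $\mathbb{R}^{n}$-valued random variable, and recall from Theorem \ref{18} that $\langle \Lambda, \eta - \xi^{*}\rangle \geq 0$ a.s.\ for every $\eta \in U$. The boundary assertion is then immediate: for a fixed deterministic point $\eta \in K$ the constant random variable $\eta$ belongs to $U$, so Theorem \ref{18} gives $\langle \Lambda, \eta - \xi^{*}\rangle \geq 0$ a.s., and in particular a.s.\ on $\Omega_{0}$. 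The genuine content lies in the interior assertion, where one must upgrade the per-$\eta$ almost-sure statement of Theorem \ref{18} to a pointwise-in-$\omega$ conclusion on $\Omega_{0}^{c}$.

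For the interior case I would exploit that on $\Omega_{0}^{c}$ one has $\xi^{*}\in \mathrm{int}(K)$ a.s.\ (because $\xi^{*}\in K$ and $K\setminus\partial K = \mathrm{int}(K)$), so that two-sided perturbations of $\xi^{*}$ are admissible and must force $\Lambda$ to vanish. To control measurability and the sample-dependent size of the admissible perturbation, set, for $k\in\mathbb{N}$,
\[
  A_{k} := \big\{\omega : \mathrm{dist}\big(\xi^{*}(\omega),\, \mathbb{R}^{n}\setminus K\big) > 1/k \big\};
\]
each $A_{k}$ is $\mathcal{F}_{T}$-measurable since $x\mapsto \mathrm{dist}(x, \mathbb{R}^{n}\setminus K)$ is continuous and $\xi^{*}$ is measurable, and $\Omega_{0}^{c} = \bigcup_{k} A_{k}$ up to a null set. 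Fixing $k$, I would test Theorem \ref{18} against the specific admissible control
\[
  \eta := \xi^{*} - \frac{1}{2k}\,\frac{\Lambda}{|\Lambda|}\,\mathbf{1}_{A_{k}\cap\{\Lambda\neq 0\}},
\]
which belongs to $U$: it is $\mathcal{F}_{T}$-measurable and square-integrable, and on $A_{k}\cap\{\Lambda\neq 0\}$ it perturbs $\xi^{*}$ by a vector of norm $1/(2k) < 1/k < \mathrm{dist}(\xi^{*}, \mathbb{R}^{n}\setminus K)$, so the perturbed value still lies in $K$. Substituting this $\eta$ into the almost-sure inequality of Theorem \ref{18} yields $\langle \Lambda, \eta - \xi^{*}\rangle = -\tfrac{1}{2k}|\Lambda| \geq 0$ on $A_{k}\cap\{\Lambda\neq 0\}$, which is impossible unless $P(A_{k}\cap\{\Lambda\neq 0\}) = 0$. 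Letting $k\to\infty$ then gives $\Lambda = 0$ a.s.\ on $\Omega_{0}^{c}$, which is exactly the second line of the corollary; on that event the first line holds trivially as well.

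The step I expect to be the main obstacle is precisely this measurable passage from the fixed-$\eta$ almost-sure inequality to a sample-wise statement: one cannot construct $\eta$ separately for each $\omega$, but must package the desired two-sided variation into a single admissible random variable $\eta\in U$, which is why the stratification by the interior radius $1/k$ and the measurability of each $A_{k}$ are essential rather than cosmetic. A minor point worth recording is the degenerate case $\mathrm{int}(K) = \emptyset$, in which every $A_{k}$ is empty, $\Omega_{0}^{c}$ is null, and the interior assertion holds vacuously; in all other cases the argument above applies verbatim.
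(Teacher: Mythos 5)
Your proof is correct. The paper in fact offers no argument at all for this corollary --- it merely asserts that it is ``directly deduced'' from Theorem \ref{18} --- so there is no written proof to compare against; your localization argument is the standard and rigorous way to make that deduction. The two essential points you supply, namely (i) that constant $\eta\in K$ are admissible elements of $U$, giving the inequality on $\Omega_{0}$, and (ii) that the interior statement requires packaging the sample-wise two-sided perturbation into a single admissible random variable via the measurable strata $A_{k}=\{\mathrm{dist}(\xi^{*},\mathbb{R}^{n}\setminus K)>1/k\}$ and the test control $\eta=\xi^{*}-\tfrac{1}{2k}\,\Lambda/|\Lambda|\,\mathbf{1}_{A_{k}\cap\{\Lambda\neq 0\}}$, are exactly what the paper's one-line assertion glosses over, and your handling of measurability, admissibility ($\eta\in U$ because the perturbation stays inside $K$), and the degenerate case $\mathrm{int}(K)=\emptyset$ is sound.
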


\begin{remark} By the above study, for the optimal terminal control $\xi^*$, we obtain the necessary condition.
Note that the previous transformation process and (H3.3) allow us to make the inverse transformation.
Therefore, the characterization of the optimal control process $u^*(\cdot)$ can be derived by the obtained stochastic maximum principle of
the optimal terminal control $\xi^*$.
\end{remark}

\section{Applications of the main result}
As stated in the section of Introduction, we study two applications of the main result established above in this section.

\subsection{Stochastic delayed LQ control involving terminal state constraints}

Stochastic delayed LQ control problem involving terminal state constraints is considered in this subsection.
In order to simplify the presentation, we focus on the case $d=n=1$.
For the higher dimensional situation, one can deal with it in a similar method without substantial difficulty.

Consider the following state equation:
\begin{equation}\label{22}
  \begin{cases}
   dX(t) = \big[A_{1}X(t)+A_{2}X(t-\delta)+A_{3}u(t)\big] dt \\
           \ \ \ \ \ \ \ \ \ \ \ + \big[B_{1}X(t)+B_{2}X(t-\delta)+B_{3}u(t)\big] dW(t), \ \ 0\leq t\leq T;\\
    X(t) = \eta(t),  \ \ \ \ \ \ \ \ \ \ \ \ \ \ \ \ \ \ \ \ \ \ \ \ \ \ \ \ \ \ \ \ \ \ \ \ \ \ \ \ \ \ \ \ \ \ \ \ \ \  -\delta\leq t\leq 0,
  \end{cases}
\end{equation}
with $A_{i},B_{i}\in \mathbb{R}, \ i=1,2,3$.

Next, we investigate the cost function independent of the running cost without loss of generality.
Therefore, subject to $u(\cdot)\in \mathcal{U}_{ad}, \ X(T)\in \mathbb{R}^{+}, a.s.$,
the goal is to minimize the following cost function:
\begin{equation}\label{23}
  J(u(\cdot))=\frac{1}{2} E[X(T)^2].
\end{equation}
Without doubt, Eq. (\ref{23}) is an extraordinary example of Problem A with
\begin{equation*}
  b(t,x,x',u)=A_{1}x+A_{2}x'+A_{3}u, \ \ \ \  \sigma(t,x,x',u)=B_{1}x+B_{2}x'+B_{3}u.
\end{equation*}
Now we give the backward formulation of problem (\ref{23}).
Denote
 \begin{equation*}
 \overline{A}_{1}=A_{3}B_{1}B_{3}^{-1} - A_{1}, \ \ \overline{A}_{2}=A_{3}B_{2}B_{3}^{-1}-A_{2}, \ \ \overline{A}_{3}=-A_{3}B_{3}^{-1}.
 \end{equation*}
 Then Eq. (\ref{22}) becomes
\begin{equation}\label{24}
  \begin{cases}
   -dX(t) = \big(\overline{A}_{1}X(t)+\overline{A}_{2}X(t-\delta)+\overline{A}_{3}u(t)\big) dt - q(t) dW(t), \ 0\leq t\leq T;\\
    X(T) = \xi, \ \  X(t)=\eta(t),  \ \ \ \ \ \ \ \ \ \ \ \ \ \ \ \ \ \ \ \ \ \ \ \ \ \ \ \ \ \ \ \ \ \ \ \ \ \ \ \ \ \ \ \ -\delta\leq t< 0,
  \end{cases}
\end{equation}
and we can rewrite problem (\ref{23}) as follows:
\begin{equation}\label{25}
  \begin{cases}
    $Minimize$ \ \ \ \ J(\xi) \\
    $subject$ \ $to$ \ \ \ \xi\in U; \  X^{\xi}(0)=a,
  \end{cases}
\end{equation}
where
\begin{equation*}
  U=\{ \xi| E|\xi|^{2}<\infty, \ \xi\in \mathbb{R}^{+}, \ a.s. \}.
\end{equation*}
By Theorem \ref{18}, if $\xi^{*}$ is optimal, we have $h_{0}\in \mathbb{R}^{+}$ and $h_{1}\in \mathbb{R}$ satisfying $|h_{0}|+|h_{1}|\neq 0$, so that  $\forall \ \eta\in U$,
\begin{equation}
 \big<m(T) + h_{0}\xi^{*}, \eta - \xi^{*}\big> \geq 0, \ \ a.s.,
\end{equation}
in which $m(\cdot)$ is the solution to the following adjoint equation:
\begin{equation}\label{26}
  \begin{cases}
   dm(t) = \left(\overline{A}_{1}m(t) + \overline{A}_{2}E^{\mathcal{F}_{t}}\big[m(t+\delta)\big]\right) dt + \overline{A}_{3}m(t) dW(t),  \ \ 0\leq t\leq T;\\
    m(0) =h_{1}, \ \ m(t)=0, \ \ \ \ \ \ \ \ \ \ \ \ \ \ \ \ \ \ \ \ \ \ \ \ \ \ \ \ \ \ \ \ \ \ \ \ \  \ \ \ \ \ \ \ \ \ \
     T< t\leq T+\delta.
  \end{cases}
\end{equation}
Denote $\Omega_{0}:=\{ \omega\in\Omega| \xi^{*}(\omega)=0 \}$.
Now, the following necessary condition can be deduced owing to the arbitrariness of $\xi$.
\[\begin{split}
     m(T) + h_{0}\xi^{*}\geq 0, \ \ a.s. \  on \ \Omega_{0};\\
     m(T) + h_{0}\xi^{*} = 0,  \ \ a.s. \  on \ \Omega_{0}^{c},
\end{split}\]
where $m(\cdot)$ is the solution of Eq. (\ref{26}).

\subsection{Production-consumption choice optimization problem}

By applying the maximum principle established before,
we investigate a type of production-consumption choice optimization problem in this subsection.
The shape for this issue, as in \cite{Wu}, originates from Ivanov and Swishchuk \cite{Ivanov}.
For the sake of completeness, let us present the model at length.

We assume an investor is going to invest his money to invent goods,
and he could obtain benefits from the goods.
We mark the capital of investor,
the labor at time $t$ and the rate of consumption by $X(t)$, $A(t)$ and $c(t)\geq 0$, respectively.
Based upon the assumption that earning of the production is a function of the total sum of the capital and labor,
in order to depict this system,
Ramsey \cite{Ramsey} introduced the following shape:
\begin{equation}\label{27}
  \frac{dX(t)}{dt}= f(X(t),A(t)) - c(t).
\end{equation}
Since in the procedure of investment, in reality, there are some risk and delay, Chen and Wu \cite{Wu} generalized the model in (\ref{27}) to the following case:
\begin{equation}\label{28}
  \begin{cases}
   dX(t) = [f(X(t-\delta),A(t))-c(t)] dt + \sigma (X(t-\delta)) dW(t), \ \ 0\leq t\leq T;\\
    X(t) = \eta(t), \ \ \ \ \ \ \ \ \ \ \ \ \ \ \ \ \ \ \ \ \ \ \ \ \ \ \ \ \ \ \ \ \ \ \ \ \ \ \ \ \ \ \ \ \ \ \ \ \ \ \ \ \ \ \ \ \ \  \
           -\delta\leq t\leq 0,
  \end{cases}
\end{equation}
where $\eta$ is a given continuous function.

However, the rationality of the shape has been questioned as
there is no constraint for the terminal capital $X(T)$ on the basis of the hypothesis.
In fact, in real situations, sometimes the investor will set a goal (constraint) for the terminal capital $X(T)$ in the investment.
Hence we believe that in a concordant model of the production and consumption
some constraints for the terminal capital $X(T)$ should be considered, i.e.,
$X(T)\in Q$,  where $Q\in \mathbb{R}^{n}$.

Let us consider the following model which modified (\ref{27}) and (\ref{28}):
\begin{equation*}\label{29}
  \begin{cases}
   dX(t) = [f(X(t-\delta),A(t))-c(t)] dt + \sigma (X(t-\delta),c(t)) dW(t), \ \ 0\leq t\leq T;\\
    X(t) = \eta(t), \ \ \ \ \ \ \ \ \ \ \ \ \ \ \ \ \ \ \ \ \ \ \ \ \ \ \ \ \ \ \ \ \ \ \ \ \ \ \ \ \ \ \ \ \ \ \  \ \ \ \ \ \ \ \ \ \ \ \ \ \ \ \ \ \ \
           -\delta\leq t\leq 0.
  \end{cases}
\end{equation*}
For simplicity, let $n=d=1$. Consider the following hypotheses:

\begin{itemize}
  \item [(1)] The function $f(X(t-\delta),A(t))=KX^{\alpha}(t-\delta)A^{\beta}(t)$,
              where $K,\alpha,\beta$ are some suitable constants.
              Moreover, let $\alpha=\beta=1$ and $A(t)\equiv y$ be a constant.
  \item [(2)] The terminal constraint $Q\in \mathbb{R}$ is a given convex set.
\end{itemize}

Under the above assumptions, we can rewrite our shape as follows:
\begin{equation}\label{30}
  \begin{cases}
   dX(t) = [KyX(t-\delta)-c(t)] dt + \sigma (X(t-\delta),c(t)) dW(t), \ \ 0\leq t\leq T;\\
    X(t) = \eta(t),  \ \ \ \ \ \ \ \ \ \ \ \ \ \ \ \ \ \ \ \ \ \ \ \ \ \ \ \ \ \ \ \ \ \ \ \ \ \ \ \ \ \ \ \ \ \ \ \ \ \ \ \ \ \ \ \ \ \ \
           -\delta\leq t\leq 0.
  \end{cases}
\end{equation}
By electing the hypothesis rate $c(t)\geq 0$ under the terminal constraint $X(T)\in Q$,
the purpose is to maximize the following desired function:
\begin{equation}\label{31}
  J(c(\cdot))=E\left[ \int_0^T e^{-rt}\frac{c^{\gamma}(t)}{\gamma} dt + X(T) \right],
\end{equation}
where $r$ represents the bond rate, $\gamma\in (0,1)$, and $1-\gamma$ represents the investor's risk aversion.

 Clearly, this is a special case of Problem A when
 $$ \mathcal{U}_{ad} \equiv \{ c(\cdot)| c(\cdot)\in L_{\mathbb{F}}^{2}(0,T;\mathbb{R}^{+}) \}$$
with
\begin{align*}
&b(t,x,x',c)=Ky\cdot x' - c,  \ \ \sigma(t,x,x',c)=\sigma(x',c), \\
&\widetilde{l}(t,x,x',c)=-e^{-rt}\frac{c^{\gamma}}{\gamma}, \ \ \phi(x)=-x.
\end{align*}

Now, let $q\equiv \sigma(x',c)$ and $f(x',q)=-Ky\cdot x' + \widetilde{\sigma}(x',q)$, where $\widetilde{\sigma}$ is the inverse function of $\sigma$ w.r.t $c$, i.e., $c=\widetilde{\sigma}:=\widetilde{\sigma}(x',q)$.
Then one can rewrite (\ref{30}) as follows:
\begin{equation}\label{32}
  \begin{cases}
   -dX(t) = [-KyX(t-\delta)+ \widetilde{\sigma}(X(t-\delta),q(t))] dt - q(t) dW(t), \ \ 0\leq t\leq T;\\
    X(t) = \eta(t),  \ \ \ \ \ \ \ \ \ \ \ \ \ \ \ \ \ \ \ \ \ \ \ \ \ \ \ \ \ \ \ \ \ \ \  \ \ \ \ \ \ \ \ \ \ \ \ \ \ \ \ \ \ \ \ \ \ \ \ \ \ \ \ \
           -\delta\leq t< 0.
  \end{cases}
\end{equation}
Define $U=\{ \xi| E|\xi|^{2}<\infty, \ \xi\in Q, \ a.s. \}$ and consider the following performance function:
\begin{equation}\label{33}
  J(\xi)=-E\left[ \int_0^T e^{-rt}\frac{\widetilde{\sigma}^{\gamma}(t)}{\gamma} dt + \xi \right].
\end{equation}
Then problem (\ref{31}) is equivalent to the following problem:
\begin{equation}\label{34}
  \begin{cases}
    $Minimize$ \ \ \ \ J(\xi) \\
    $subject$ \ $to$ \ \ \ \xi\in U; \  X^{\xi}(0)=a.
  \end{cases}
\end{equation}
Consider the adjoint equation
\begin{equation}\label{35}
  \begin{cases}
   dm(t) = E^{\mathcal{F}_{t}}\big[\big(-Ky + (\widetilde{\sigma}_{x_{\delta}}|_{t+\delta})\big)m(t+\delta)\big] dt \\
     \ \ \ \ \ \ \ \ \ \ \ + \left[\widetilde{\sigma}_{q}(t)m(t)-h_{0}e^{-rt}\cdot\widetilde{\sigma}^{\gamma-1}(t)\right] dW(t),  \ \ 0\leq t\leq T;\\
    m(0) =h_{1}, \ \ m(t)=0, \ \ \ \ \ \ \ \ \ \ \ \ \ \ \ \ \ \ \ \ \ \ \ \ \ \ \ \ \ \ \ \ \ \ \ T< t\leq T+\delta,
  \end{cases}
\end{equation}
where $h_{1}\in \mathbb{R}$ is a parameter.
Denote $\Omega_{0}:=\{ \omega\in\Omega| \xi^{*}(\omega)=\partial Q \}$.
Therefore, by using Theorem \ref{18}, one obtains the result below.
\begin{theorem}
Suppose $(X^{*}(\cdot),c^{*}(\cdot))$ is an optimal pair to problem (\ref{31}), then we have $h_{0}\in \mathbb{R}^{+}$ and $h_{1}\in\mathbb{R}$ satisfying
$|h_{0}|+|h_{1}|\neq 0$ so that $\xi^{*}\equiv X^{*}(T)$, we have
\[\begin{split}
     m(T) + h_{0}\xi^{*}\geq 0, \ \ a.s. \  on \ \Omega_{0};\\
     m(T) + h_{0}\xi^{*} = 0,  \ \ a.s. \  on \ \Omega_{0}^{c},
\end{split}\]
where $m(\cdot)$ is the solution to Eq. (\ref{35}) with parameter $h_{1}$.
\end{theorem}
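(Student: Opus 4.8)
The plan is to obtain this theorem as a direct application of the general maximum principle of Theorem \ref{18} together with its Corollary, specialized to the production-consumption model (\ref{31}). First I would confirm that (\ref{31}) is genuinely an instance of Problem A under the identifications $b(t,x,x',c)=Kyx'-c$, $\sigma(t,x,x',c)=\sigma(x',c)$, $\widetilde{l}(t,x,x',c)=-e^{-rt}c^{\gamma}/\gamma$ and $\phi(x)=-x$, where maximizing (\ref{31}) is recast as minimizing its negative. The terminal requirement $X(T)\in Q$ is precisely the state constraint of Problem A, and I would record that (H3.3) --- the nondegeneracy $|\sigma(x',c_{1})-\sigma(x',c_{2})|\geq\alpha|c_{1}-c_{2}|$ --- is exactly what makes $c\mapsto\sigma(x',c)$ invertible, so that the backward reformulation is legitimate.

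Next I would carry out the backward transformation verbatim as in Section 3.2: setting $q\equiv\sigma(x',c)$ and $c=\widetilde{\sigma}(x',q)$, the forward delayed dynamics (\ref{30}) become the time-delayed BSDE (\ref{32}) with driver $f(t,x,x',q)=-Kyx'+\widetilde{\sigma}(x',q)$; the state constraint becomes the control constraint $\xi\in U=\{\xi:E|\xi|^{2}<\infty,\ \xi\in Q\ a.s.\}$; and the performance becomes $J(\xi)$ of (\ref{33}). This places us in the setting of Problem B (\ref{34}), to which Theorem \ref{18} applies with $\xi^{*}\equiv X^{*}(T)$.

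I would then compute the coefficients of the time-advanced adjoint equation (\ref{17}) for this particular driver and running cost. Since $f$ does not depend on $x$, one has $f^{*}_{x}=0$; the delay derivative is $f^{*}_{x_{\delta}}=-Ky+\widetilde{\sigma}_{x_{\delta}}$; the $q$-derivative is $f^{*}_{q}=\widetilde{\sigma}_{q}$; and $l^{*}_{x}=0$, with $l^{*}_{q}$ coming from differentiating $-e^{-rt}\widetilde{\sigma}^{\gamma}/\gamma$. Substituting these into (\ref{17}) reproduces the adjoint equation (\ref{35}), whose unique $\mathbb{F}$-adapted solution is furnished by Proposition \ref{4}. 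Theorem \ref{18} then yields $\langle m(T)+h_{0}\phi_{x}(\xi^{*}),\eta-\xi^{*}\rangle\geq 0$ for all $\eta\in U$, and the subsequent Corollary splits this over the boundary set $\Omega_{0}=\{\xi^{*}\in\partial Q\}$ and its complement, producing the two displayed pointwise relations once the value of $\phi_{x}(\xi^{*})$ is inserted.

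The step I expect to be the genuine obstacle is the verification of the regularity hypotheses (H3.1)-(H3.2) and the nondegeneracy (H3.3) for this concrete model. The utility $c^{\gamma}/\gamma$ with $\gamma\in(0,1)$ together with the admissibility constraint $c\geq 0$ makes the derivative $c^{\gamma-1}$ blow up as $c\downarrow 0$, so the global boundedness of derivatives demanded by (H3.1)-(H3.2) fails outright; one must either restrict attention to controls bounded away from zero, impose extra structure on $\sigma(x',c)$ that keeps $\widetilde{\sigma}$ and its derivatives under control, or interpret the hypotheses on the relevant effective domain. Checking that the abstractly specified $\sigma(x',c)$ really satisfies (H3.3), so that $\widetilde{\sigma}$ is well defined and smooth enough for the adjoint coefficients to make sense, is the other delicate point that the application glosses over.
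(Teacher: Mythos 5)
Your proposal follows exactly the paper's (implicit) argument: the paper gives no separate proof of this theorem, but simply performs the backward reformulation (\ref{32})--(\ref{34}), writes down the adjoint equation (\ref{35}), and invokes Theorem \ref{18} together with its corollary, which is precisely your plan, including the same computation of the adjoint coefficients. Your closing caveats about (H3.1)--(H3.3) (the blow-up of $c^{\gamma-1}$ near $c=0$ and the unverified invertibility of $\sigma$) are legitimate concerns that the paper itself glosses over, and your remark that one must insert $\phi_{x}(\xi^{*})=-1$ is in fact more careful than the paper's displayed conclusion.
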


\section{Conclusions}

In this content, we study a stochastic optimal control problem for stochastic differential delayed equation
with terminal state constraint (at the terminal time, the state is constrained in a convex set).
 However, the control problem with terminal non-convex state constraint is still open.
We will focus on the open problem in the future study.

\section*{Competing interests}
The first and second authors announce to have no competing interests.

\section*{Authors' contributions}
The first and second authors contributed equally to the writing of this paper.
Moreover, the first and second authors read and approved the final manuscript.

\section*{Acknowledgements}

The first author would like to appreciate the Department of Mathematics of University of Central Florida, USA,
for its hospitality, and
express the gratitude to Prof. Qingmeng Wei for her careful reading of this paper and helpful comments.

\end{document}